\documentclass[12pt,a4paper]{amsart}
\usepackage{amssymb}
\usepackage{amsmath}
\usepackage{mathtools} 
\usepackage{stackrel} 
\usepackage[hidelinks]{hyperref}
\usepackage{etoolbox}
\usepackage{enumitem}
\usepackage{xcolor}
\usepackage{tikz-cd}
\usepackage[normalem]{ulem}
\hypersetup{
	colorlinks,
	linkcolor={red!30!black},
	citecolor={blue!50!black},
	urlcolor={blue!80!black}
}

\newcommand{\F}{\mathbb{F}}

\newcommand{\Q}{\mathbb{Q}}

\newcommand{\C}{\mathbb{C}}
\newcommand{\Cc}{\mathbb{C}^{\times}}

\newcommand{\Ql}{\bar{\mathbb{Q}}_{\ell}}

\newcommand{\cF}{\mathcal{F}}

\newcommand{\cO}{\mathcal{O}}
\newcommand{\bb}{\mathbf{b}}

\newcommand{\bg}{\mathbf{g}}

\newcommand{\bt}{\mathbf{t}}

\newcommand{\bB}{\mathbf{B}}
\newcommand{\bG}{\mathbf{G}}
\newcommand{\bL}{\mathbf{L}}
\newcommand{\bT}{\mathbf{T}}

\newcommand{\bH}{\mathbf{H}}

\newcommand{\bU}{\mathbf{U}}

\newcommand{\fg}{\mathfrak{g}}

\newcommand{\fb}{\mathfrak{b}}
\newcommand{\ft}{\mathfrak{t}}

\newcommand{\fm}{\mathfrak{m}}

\newcommand{\sgn}{\operatorname{sgn}}
\newcommand{\rank}{\operatorname{rank}}

\newcommand{\ad}{\operatorname{ad}}

\newcommand{\IC}{\operatorname{IC}}

\newcommand{\Res}{\operatorname{Res}}

\newcommand{\cci}{C_c^{\infty}}

\newcommand{\tn}{\mathrm{tn}}

\newcommand{\Hom}{\operatorname{Hom}}

\newcommand{\Aut}{\operatorname{Aut}}

\newcommand{\tr}{\operatorname{tr}}

\newcommand{\SO}{\mathrm{SO}}
\newcommand{\OO}{\mathrm{O}}
\newcommand{\Sp}{\mathrm{Sp}}
\newcommand{\Spin}{\mathrm{Spin}}
\newcommand{\GL}{\mathrm{GL}}
\newcommand{\SL}{\mathrm{SL}}
\newcommand{\SU}{\mathrm{SU}}

\newcommand{\fsl}{\mathfrak{sl}}
\newcommand{\fso}{\mathfrak{so}}
\newcommand{\fsp}{\mathfrak{sp}}
\newcommand{\fspin}{\mathfrak{spin}}
\newcommand{\fsu}{\mathfrak{su}}
\newcommand{\Fr}{\mathrm{Fr}}

\newcommand{\nil}{\mathrm{nil}}
\newcommand{\st}{\mathrm{st}}
\newcommand{\FC}{\operatorname{FC}}
\newcommand{\Bred}{\mathcal{B}_{\mathrm{red}}}
\usepackage{leftindex}
\newcommand{\li}{\leftindex}

\newcommand{\qd}[2]{\left(\frac{#1}{#2}\right)}

\usepackage{mathrsfs}
\newcommand{\scC}{\mathscr{C}}

\usepackage{aliascnt}
\theoremstyle{plain}
\newtheorem{theorem}{Theorem}
\newaliascnt{lemma}{theorem}
\newaliascnt{proposition}{theorem}
\newaliascnt{corollary}{theorem}
\newaliascnt{remark}{theorem}
\newaliascnt{definition}{theorem}
\newaliascnt{conjecture}{theorem}
\newaliascnt{example}{theorem}
\newaliascnt{question}{theorem}
\newtheorem{lemma}[lemma]{Lemma}

\newtheorem{corollary}[corollary]{Corollary}

\theoremstyle{remark}
\newtheorem{remark}[remark]{Remark}

\aliascntresetthe{lemma}
\aliascntresetthe{proposition}
\aliascntresetthe{corollary}
\aliascntresetthe{remark}
\aliascntresetthe{definition}
\aliascntresetthe{example}
\aliascntresetthe{conjecture}
\aliascntresetthe{question}

\title{Lusztig constants and endoscopy}
\author{Wille Liu}
\address{Institute of Mathematics, Academia Sinica, 6F, Astronomy-Mathematics Building, No. 1, Sec. 4, Roosevelt Road, Taipei, Taiwan}\email{wliu@gate.sinica.edu.tw}
\author{Wei-Hsuan Hsin}
\address{Department of Mathematics, National Taiwan University, Astronomy-Mathematics Building 5F, No. 1, Sec. 4, Roosevelt Road, Taipei, Taiwan}\email{b11201030@ntu.edu.tw}
\author{Cheng-Chiang Tsai}\address{Institute of Mathematics, Academia Sinica, 6F, Astronomy-Mathematics Building, No. 1, Sec. 4, Roosevelt Road, Taipei, Taiwan,\vskip.05cm
\noindent also Department of Applied Mathematics, National Sun Yat-Sen University, and Department of Mathematics, National Taiwan University}\email{chchtsai@gate.sinica.edu.tw}
\thanks{W.-H. H. is supported by NSTC grant 114-2628-M-001-003. C.-C. T. is supported by NSTC grant 114-2115-M-001-009, 114-2628-M-001-003, and AS grant AS-IV-115-M07.}

\begin{document}

\begin{abstract}
We prove that on a semisimple Lie algebra $\mathfrak{g}$ over a finite field of large characteristic, if a complex-valued invariant function $f$ and its Fourier transform $\hat f$ are both supported in the nilpotent cone of $\mathfrak{g}$, then $\hat f = \gamma^{-1}f$ for an explicit quadratic Gauss sum $\gamma$. Consequently, we determine a fourth root of unity appearing in various formulae of generalised Gel'fand--Graev characters, known as Lusztig constant, previously known in special cases due to works of Kawanaka, Digne--Lehrer--Michel, Waldspurger and Geck. As consequence, we show the validity of a conjecture of Letellier on the compatibility of Fourier transform with Deligne--Lusztig induction.
\end{abstract}

\maketitle

\section{Introduction}\label{sec:intro}

Let $\bG$ be a connected semisimple algebraic group defined over a finite field $\F_q$ and let $\bg$ denote its Lie algebra. 

We assume there exists a non-degenerate $\bG$-invariant symmetric bilinear form $\langle\relbar,\relbar\rangle$ on $\bg$ defined over $\F_q$ and fix such a choice. Let $G = \bG(\F_q)$ and $\fg = \bg(\F_q)$ denote their $\F_q$-points. Let $C(\fg)^G$ be the space of $\C$-valued functions on $\fg$ that are invariant under the adjoint $G$-action.
Fix a non-trivial additive character $\psi:\F_q\to \Cc$. The Fourier transform for a function $f\in C(\fg)$ is defined by 
\[
    \hat f\in C(\fg),\quad \hat f(X) = q^{-\dim \fg/2}\sum_{Y\in\fg} \psi(\langle X,Y\rangle)f(Y).
\]
It satisfies the involutivity $\hat{\hat{f}}(X) = f(-X)$ and preserves the subspace $C(\fg)^G$ of $G$-invariant functions.
The \emph{Weil index} of $\fg$ is defined to be the normalised Gauss sum
\[
    \gamma_{\psi}(\fg) = q^{-\dim \fg/2}\sum_{X\in \fg}\psi\left(\frac{\langle X,X\rangle}{2}\right).
\]
It is known to be a fourth root of unity. 
Following Waldspurger, we define
\[
\FC(\fg) = C(\fg^{\nil})^G\cap \widehat{C(\fg^{\nil})^G}\subseteq C(\fg)^G
\]
to be the subspace of invariant functions $f\in C(\fg)^G$ such that both $f$ and $\hat f$ vanish ouside the nilpotent cone $\fg^{\nil} = \bg^{\nil}(\F_q)\subseteq\fg$. The main result of this paper, proven in~\autoref{sec:proof}, is the following:
\begin{theorem}\label{thm:main}
Assume that $p > 3(h(\bG)-1)$, where $h(\bG)$ is the supremum of the Coxeter numbers of the simple\footnote{Throughout this article, `simple' means `absolutely quasi-simple'.} factors of $\bG$. For $f\in \FC(\fg)$, we have
\[
    \hat f = \gamma_{\psi}(\fg)^{-1}f.
\]
\end{theorem}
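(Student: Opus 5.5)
The plan is to reduce to a single eigenvalue computation and then evaluate it on a test function where everything is explicit.

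\emph{Step 1: reduction to one eigenvalue.} Since $\hat{\hat f}(X)=f(-X)$ and $-1$ preserves each nilpotent orbit up to $G$-conjugacy in good characteristic, the Fourier transform acts on $\FC(\fg)$ with square equal to the identity. The first goal is stronger: $\hat f=c f$ for one constant $c$ independent of $f$. I would obtain this from Lusztig's theory of character sheaves on $\bg$ (Fourier--Deligne transform of orbital sheaves, valid for $p$ large). The space $\FC(\fg)$ is spanned by characteristic functions of cuspidal-type objects: Fourier transforms of orbital IC sheaves whose support is nilpotent. For such a sheaf, Lusztig's results show that the Fourier--Deligne transform is again the same IC sheaf, up to a Tate twist and a scalar determined by the Frobenius structure. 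The point to prove is that this scalar is uniform and depends only on $\fg$, not on the particular sheaf.

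\emph{Step 2: pinning the constant via a stationary-phase type identity.} Take $f\in\FC(\fg)$ with $\hat f=cf$. Evaluate
\[
\sum_X \hat f(X)\,\psi\!\left(\tfrac{\langle X,X\rangle}{2}\right)
\]
in two ways, or more robustly use the Weil representation. Multiplication by the quadratic character $\psi(\langle X,X\rangle/2)$ is the identity on functions supported on $\fg^{\nil}$, because $\langle X,X\rangle=0$ for nilpotent $X$. Write $m$ for this multiplication operator and $\mathcal F$ for the Fourier transform. In the metaplectic relations for $\mathrm{SL}_2$ acting on $C(\fg)$, these satisfy $(\mathcal F m)^3=\gamma_\psi(\fg)\cdot(\text{sign})\,\mathcal F^2$, up to normalisation conventions that I would check.

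Applied to $f\in\FC(\fg)$, every intermediate function is either $f$ or $\hat f$ up to scalars. Both are nilpotently supported, so $m$ acts trivially throughout. Hence $\mathcal F^3 f=\gamma_\psi(\fg)^{\pm1}\mathcal F^2 f$, which gives $\hat f=\gamma_\psi(\fg)^{-1}f$ (with the sign fixed by the convention). The identity $(\mathcal F m)^3=\gamma\,\mathcal F^2$ itself is a routine Gauss-sum computation: complete the square in
\[
\sum_{Y,Z}\psi\bigl(\langle X,Y\rangle+\tfrac{\langle Y,Y\rangle}{2}+\langle Y,Z\rangle+\tfrac{\langle Z,Z\rangle}{2}+\dots\bigr).
\]
This is really the classical fact that $\begin{pmatrix}0&1\\-1&0\end{pmatrix}$ and $\begin{pmatrix}1&1\\0&1\end{pmatrix}$ satisfy $(ST)^3=S^2$ in $\mathrm{SL}_2(\F_q)$, together with the Weil-index cocycle.

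\emph{Step 3: consistency.} With Step 2, Step 1 becomes unnecessary. The argument only uses that $f$ and $\hat f$ are nilpotently supported, and does not need $f$ to be an eigenvector in advance. Concretely, $m\hat f=\hat f$ and $mf=f$, so $\mathcal F m\mathcal F m\mathcal F m f=\mathcal F\mathcal F\mathcal F f$, and comparing with the relation directly gives the result.

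\emph{Main obstacle.} The main obstacle is thus the precise metaplectic identity with the correct normalised constant. I also need to verify that $m$ fixes nilpotently supported functions, which requires $\langle X,X\rangle=0$ on $\fg^{\nil}$. This holds because nilpotent elements are $\mathrm{ad}$-nilpotent and the form is proportional to Killing-type traces on each simple factor; this is where the hypothesis $p>3(h-1)$ (or just $p$ large) would be invoked.
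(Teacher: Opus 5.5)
Your Steps 2--3 give a correct proof, by a route entirely different from the paper's.

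\textbf{How the two routes differ.} The paper argues by induction on $\dim\bG$. It reduces to simple simply connected groups and takes $G_2,F_4,E_8$ from Kawanaka. For the remaining types it realises $\bg_0$ at a hyperspecial vertex of an unramified $p$-adic group and uses Waldspurger's description of the endoscopic transfer of $\FC(\fg)$, checking that the relevant endoscopic group is proper. It then uses the compatibility of Fourier transform with transfer up to $\gamma(\fg)/\gamma(\fg')$, which rests on the fundamental lemma, and applies induction to $\bG'$.

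You use only the metaplectic relation on $C(\fg)$. Completing the square gives, for every $f\in C(\fg)$,
\[
(\mathcal F m\mathcal F f)(X)=\gamma_\psi(\fg)\,q^{-\dim\fg/2}\sum_{Z\in\fg}f(Z)\,\psi\bigl(-\tfrac12\langle X+Z,X+Z\rangle\bigr),
\]
that is, $\mathcal F m\mathcal F=\gamma_\psi(\fg)\,m^{-1}\mathcal F^{-1}m^{-1}$. For $f\in\FC(\fg)$ the left side is $\mathcal F\hat f=f(-\,\cdot\,)$. The right side is $\gamma_\psi(\fg)\hat f(-\,\cdot\,)$, because $\langle Z,Z\rangle=0$ on $\fg^{\nil}$ and $\hat f(-X)\neq 0$ forces $X\in\fg^{\nil}$. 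Hence $\hat f=\gamma_\psi(\fg)^{-1}f$.

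\textbf{What each buys.} Your argument needs no $G$-invariance, no classification of cuspidal pairs and no base cases. It proves more: on any non-degenerate quadratic space, if $f$ and $\hat f$ both vanish off the isotropic cone, then $\hat f=\gamma^{-1}f$. The hypothesis on $p$ enters only through the isotropy of nilpotent elements, which holds far more generally. For example, a nilpotent element is conjugate into the Lie algebra of the unipotent radical of a Borel subgroup, and that is isotropic for any $\bT$-invariant form. The paper's heavy, case-dependent route (requiring $p>3(h(\bG)-1)$) adds the structural compatibility of Lusztig constants with endoscopic transfer, not the statement itself.

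\textbf{What needs correcting.} The whole content of the theorem is which fourth root of unity appears, so two points in your write-up must be fixed.

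First, with the paper's normalisation the relation is $(\mathcal F m)^3=\gamma_\psi(\fg)\cdot\mathrm{id}$, not $\gamma_\psi(\fg)\,\mathcal F^2$. Correspondingly, for $S=\begin{pmatrix}0&1\\-1&0\end{pmatrix}$ one has $(ST)^3=1$; the relation $(ST)^3=S^2$ holds for $S^{-1}$. Used literally, your version gives $\hat f=\gamma_\psi(\fg)f$, which is off by $\gamma_\psi(\fg)^2=\left(\frac{-1}{q}\right)^{\dim\fg}$. With the correct relation you get $\mathcal F^3f=\gamma f$ on $\FC(\fg)$, and applying this to $\hat f\in\FC(\fg)$ gives $f=\gamma\hat f$.

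Second, the claim in Step~1 that $\mathcal F^2$ is the identity on $\FC(\fg)$ is false. Rational nilpotent orbits need not be stable under $X\mapsto -X$, and $\mathcal F^2$ acts on $\FC(\fg)$ by $\gamma_\psi(\fg)^2$. For $\fsl_2$ with $q\equiv 3\bmod 4$ the Lusztig constant is $\pm i$. Since you discard Step~1 this does no harm, but it should be deleted.
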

The fourth root of unity $\zeta$ for which $\hat f = \zeta f$, called \emph{Lusztig constant} (for a cuspidal block) in~\cite{Let05}, has appeared in the study of characters of finite reductive groups, notably in the calculation of generalised Gel'fand--Graev characters in Kawanaka~\cite{Kaw86} and Lusztig~\cite{Lus92}. Explicit formulae for $\zeta$ are previously known for $\bG$ simple of types $G_2$, $F_4$ and $E_8$ by Kawanaka~\cite{Kaw86} (see also Geck~\cite{Gec99}), $A_n$ by Digne--Lehrer--Michel~\cite{DLM92,DLM97}, and classical groups $\SO(2n+1), \Sp(2n)$ and $\SO(2n)$ by Waldspurger~\cite{Wal01}. We calculate $\gamma_{\psi}(\fg)$ in~\autoref{ssec:Weil-simple} for simple groups $\bG$ and explain how \autoref{thm:main} is compatible with these formulae. 
\par
The proof of~\autoref{thm:main} proceeds by induction on $\dim \bG$. The base cases are the trivial group $\bG = 1$ and types $G_2$, $F_4$ and $E_8$, where the formula is known due to Kawanaka~\cite{Kaw86}. The inductive step relies mainly on Waldspurger's works~\cite{Wal22,Wal25}, which describe the endoscopic transfer of the space $\FC(\fg)$ for a $p$-adic Lie algebra $\fg$, as well as the commutation of Fourier transform with endoscopic transfer up to Weil indices, proved in~\cite{Wal95,Wal97} and conditioned on the validity of the fundamental lemma for Lie algebras, later verified by Ng\^o~\cite{Ngo10}. \par
In~\autoref{sec:let}, we deduce two conjectures of Letellier~\cite{Let05} concerning the change of Lusztig constants across different $\F_q$-forms on $\bG$ and the compatibility of Fourier transform with Deligne--Lusztig induction.

\section{Weil indices and Lusztig constants}
We fix throughout this article an isomorphism $\overline{\Q}_\ell \cong \C$. We also fix a finite field $\F_q$ be a finite field of odd characterstic $p$.
\subsection{Fourier transform and Weil indices}
Choose a non-trivial additive character $\psi: \F_q\to \Cc$. Given a finite-dimensional $\F_q$-vector space $V$ equipped with a non-degenerate symmetric bilinear form $Q:V\times V\to \F_q$. The Fourier transform is defined by
\[
(f\mapsto \hat f) : C(V)\to C(V^*), \quad \hat f(X) = q^{-\dim V / 2}\sum_{Y\in V} f(Y)\psi(Q(X, Y))
\]
It satisfies the involutivity: $\hat{\hat f}(X) = f(-X)$. \par
The \emph{Weil index} for the pair $(V,Q)$ is defined to be the normalised Gauss sum:
\[
    \gamma_{\psi}(V,Q) = q^{-\dim V/2}\sum_{X\in V}\psi\left(\frac{Q(X,X)}{2}\right).
\]
See~\cite[appendix A]{Ran93} for reference.
We will sometimes suppress $\psi$ or $Q$ or both from the notation when it is clear from the context. It is known that $\gamma_\psi(V,Q)$ is a fourth root of unity. Let
\[
\varepsilon(\psi) = q^{-1/2}\sum_{X\in \F_q} \psi(X^2)
\]
denote the normalised classical Gauss sum. It is well-known that
\[
    \gamma_{\psi}(V,Q) = \varepsilon(\psi)^{\dim V}\left(\frac{\det(Q/2)}{q}\right),
\]
where $\left(\frac{\bullet}{q}\right)$ is the quadratic character (Legendre symbol) of $\F_q^{\times}$. \par

Let $\sigma\in \OO(V,Q)$ be a linear automorphism of $V$ preserving $Q$. We introduce the Galois twist $(\li^{\sigma}V, \li^{\sigma}Q)$ as follows: the quadratic space $(V, Q)$ extends by linearity to $(\overline{V}, \overline{Q})$ to the algebraic closure $\overline{\F}_q$; let $\li^{\sigma}V = \overline{V}^{\sigma\Fr}$ be the subspace of $(\sigma\Fr)$-invariants; since $\overline{Q}$ is $(\Fr, \sigma)$-invariant, it descends by restriction to a symmetric bilinear form $\li^{\sigma}Q$ on $\li^{\sigma}V$. 
\begin{lemma}\label{lem:galois-twist}
The Weil indices of $(V, Q)$ and $(\li^{\sigma}V, \li^{\sigma}Q)$ are related as follows:
\[
\gamma_{\psi}(\li^{\sigma}V, \li^{\sigma}Q) = \det(\sigma)\gamma_{\psi}(V, Q).
\]
\end{lemma}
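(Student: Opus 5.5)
Since $\dim \li^{\sigma}V = \dim V$ (the form $\sigma\Fr$ is a Frobenius structure on $\overline{V}$), the displayed formula
\[
\gamma_{\psi}(V,Q) = \varepsilon(\psi)^{\dim V}\left(\frac{\det(Q/2)}{q}\right)
\]
reduces the lemma to a comparison of discriminants. Concretely, I will show
\[
\left(\frac{\det(\li^{\sigma}Q)}{q}\right) = \det(\sigma)\left(\frac{\det(Q)}{q}\right),
\]
where $\det(\sigma)\in\{\pm1\}$ because $\sigma\in\OO(V,Q)$. The scalar $2^{-\dim V}$ appears on both sides and cancels.

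To compare the discriminants, I use Lang's theorem for $\GL(\overline V)$. There is $g\in\GL(\overline V)$ with $g^{-1}\Fr(g)=\sigma$; here $\Fr$ acts on $\GL(\overline V)$ via the $\F_q$-structure $V$, and the order of composition is fixed so that the following holds. The map $g$ carries $\overline{V}^{\Fr}=V$ isomorphically onto $\overline{V}^{\sigma\Fr}=\li^{\sigma}V$; one checks that $x$ is $\Fr$-fixed exactly when $g x$ is $\sigma\Fr$-fixed, after adjusting between $g$ and $g^{-1}$ if needed.

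Pull $\li^{\sigma}Q$ back along $g$ to a form $Q'(x,y)=\overline Q(gx,gy)$ on $V$. Computing Gram matrices in an $\F_q$-basis of $V$ gives $\det Q' = \det(g)^2\det Q$. Since both determinants lie in $\F_q^\times$, we have $\det(g)^2\in\F_q^\times$, and the quadratic character of $\det(g)^2$ measures whether $\det g$ itself lies in $\F_q$. Applying $\Fr$ gives $\Fr(\det g)=\det(g)\det(\sigma)$, so $\det(g)^{q-1}=\det\sigma$. Hence $\det g\in\F_q$ if and only if $\det\sigma=1$. If $\det\sigma=-1$, then $\det g\notin\F_q$ while $c=(\det g)^2\in\F_q^\times$; thus $c$ is a non-square in $\F_q$ and $\left(\frac{c}{q}\right)=-1$. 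In both cases $\left(\frac{\det(g)^2}{q}\right)=\det\sigma$, which is the required identity.

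The main obstacle is the bookkeeping in the Lang's theorem step: getting the order $g^{-1}\Fr(g)$ versus $\Fr(g)g^{-1}$ consistent with $\sigma\Fr$, and justifying $\Fr(\det g)=\det(g)\det(\sigma)$. This last identity holds because $\det$ commutes with $\Fr$ and $\det\sigma\in\F_q$. As an alternative, I could first decompose $\sigma$ to reduce to the case where $\sigma$ is a reflection. But that route needs the twist to be multiplicative in $\sigma$, which fails in general, so I prefer the direct Lang argument above.
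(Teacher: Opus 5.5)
Your proof is correct and is essentially the paper's argument: choose a solution of the Lang equation in $\GL(\overline V)$, observe that the Gram matrix of $\li^{\sigma}Q$ is $A^tMA$ so the discriminants differ by $\det(A)^2$, and use $\det(A)^{q-1}=\det\sigma$ to decide whether $\det(A)\in\F_q$, hence whether $\det(A)^2$ is a square in $\F_q$. Your caveat about $g$ versus $g^{-1}$ is well placed: with $g^{-1}\Fr(g)=\sigma$, it is $g^{-1}$ that carries $V$ onto $\li^{\sigma}V$. This is harmless, since $\det(g)^{2}$ and $\det(g)^{-2}$ have the same quadratic character.
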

\begin{proof}
Since the Lang map on $\GL(\overline{V})$ is surjective, we may choose $A\in \GL(\overline{V})$ such that $A^{-1}\Fr(A) = \sigma\in \GL(\overline{V})$. Choose a basis $\{v_1, \ldots, v_d\}$ for $V$. In this basis, we express $A$ as a matrix $A = (a_{i,j})_{i,j=1}^d$ and we express the bilinear form $Q$ as a matrix $M = (Q(v_i,v_j))_{i,j=1}^d$. Then, the bilinear form $\li^\sigma Q$ is expressed by the matrix $A^tMA$ in the basis $\{Av_1, \ldots, Av_d\}$ of $\li^\sigma V$. It follows that
\[
    \gamma_{\psi}(\li^{\sigma}V, \li^{\sigma}Q)\gamma_{\psi}(V, Q)^{-1} = \qd{\det(A^tMA/2)/\det(M/2)}{q} = \qd{\det(A)^2}{q}.
\]
On the other hand, since $\det(A)^{-1}\Fr(\det A) = \det(\sigma)$, we have $\det(A)\in \F_q^{\times}$ if and only if $\det(\sigma) = 1$. Therefore, $\qd{\det(A)^2}{q} = \det(\sigma)$.
\end{proof}

\subsection{Weil indices for reductive Lie algebras}
We resume to the setup of~\autoref{sec:intro}. Let $\Fr:\bG\to \bG$ and $\Fr: \bg\to \bg$ denote the Frobenius maps, so that $G = \bG^{\Fr}$, $\fg = \bg^{\Fr}$ are the points. Choose a non-trivial additive character $\psi:\F_q\to \Cc$ and a $(\bG,\Fr)$-invariant non-degenerate bilinear form $\langle\relbar,\relbar\rangle_{\bg}$ on $\bg$, which induces an isomorphism $\bg\xrightarrow{\sim} \bg^*$. Taking $\Fr$-invariants, we get a non-degenerate bilinear form $\langle\relbar,\relbar\rangle_{\fg}$ on $\fg$. The Fourier transform $(f\mapsto \hat f): C(\fg)\to C(\fg)$ preserves the $G$-invariant functions and yields $(f\mapsto \hat f):C(\fg)^G\to C(\fg)^G$. We will denote by $\gamma(\fg) = \gamma_{\psi}(\fg)$ the Weil index of $\fg$ equipped with the symmetric bilinear form $\langle\relbar,\relbar\rangle_{\fg}$. \par

We explain now how to calculate the Weil index $\gamma(\fg)$. The following lemma is easly to prove:
\begin{lemma}\label{lem:reduction-weil}\leavevmode
\begin{enumerate}
\item
If $\varphi:\bG'\to \bG$ is an isogeny defined over $\F_q$ and $\bg'$ is equipped with the is the induced bilinear form $\langle\relbar,\relbar\rangle_{\bg'} = \langle\relbar,\relbar\rangle_{\bg}\circ(d\varphi\times d\varphi)$, then $\gamma(\fg) = \gamma(\fg')$.
\item
If $\bG = \bG_1\times \cdots\times \bG_r$, and $\bg_i$ is equipped with the bilinear form $\langle\relbar,\relbar\rangle_{\bg}|_{\bg_i\times\bg_i}$ for $i = 1, \ldots, r$, then $\gamma(\fg) = \gamma(\fg_1)\cdots\gamma(\fg_r)$.
\item
If $\bG = \Res_{\F_{q^r}/\F_q}\bG_0$ and $\langle\relbar,\relbar\rangle_{\bg} = \tr_{\F_{q^r}/\F_q}\circ\langle\relbar,\relbar\rangle_{\bg_0}$, then $\gamma_{\psi}(\bg) = \gamma_{\psi_0}(\bg_0)$, where $\psi_0 = \psi\circ\tr_{\F_{q^r}/\F_q}:\F_{q^r}\to \Cc$.
\item
For $a\in \F_q^{\times}$, we have
\[
    \gamma(\fg, a\langle\relbar,\relbar\rangle) = \gamma(\fg, \langle\relbar,\relbar\rangle)\left(\frac{a}{q}\right)^{\dim \fg}.
\]\qed
\end{enumerate}
\end{lemma}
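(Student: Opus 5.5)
All four parts follow from the determinant formula
\[
\gamma_{\psi}(V,Q)=\varepsilon(\psi)^{\dim V}\qd{\det(Q/2)}{q}
\]
recalled above, together with direct manipulations of the defining Gauss sum. The plan is to treat each item separately, reducing it to a comparison of Gauss sums or of discriminants.

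For (1), I use that a central isogeny $\varphi:\bG'\to\bG$ between semisimple groups has $d\varphi$ an isomorphism $\bg'\xrightarrow{\sim}\bg$. This needs $p$ not dividing the order of the kernel, which I take as part of the standing hypothesis that the invariant form on $\bg$ is non-degenerate and pulls back to a non-degenerate form. The map $d\varphi$ is defined over $\F_q$ and commutes with $\Fr$, so it restricts to an $\F_q$-linear isometry $(\fg',\langle\relbar,\relbar\rangle_{\fg'})\xrightarrow{\sim}(\fg,\langle\relbar,\relbar\rangle_{\fg})$. The Gauss sums then agree term by term after reindexing. This verification of non-degeneracy is the only step I expect to need care.

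For (2), the decomposition $\bg=\bigoplus\bg_i$ is $\bG$-invariant and defined over $\F_q$. I will show the summands are mutually orthogonal for the invariant form.
\begin{itemize}
\item If $\Fr$ permutes the factors, I group the factors into $\Fr$-orbits, which keeps everything rational.
\item Orthogonality holds because $\langle\bg_i,\bg_j\rangle=\langle[\bg_i,\bg_i],\bg_j\rangle=\langle\bg_i,[\bg_i,\bg_j]\rangle=0$ for $i\neq j$, using perfectness of the simple factors for $p$ large. Alternatively it follows from invariance under the centre of $\bG_i$ acting trivially.
\end{itemize}
Orthogonality gives $\fg=\bigoplus\fg_i$ as an orthogonal sum of quadratic spaces. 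The Gauss sum then factors as a product, since $\psi(\langle X,X\rangle/2)=\prod_i\psi(\langle X_i,X_i\rangle/2)$ and the normalising powers of $q$ multiply correctly.

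For (3), I identify $\fg=\bg_0(\F_{q^r})$, regarded as an $\F_q$-space of dimension $r\dim\bg_0$. This gives
\[
q^{-\dim_{\F_q}\fg/2}=(q^r)^{-\dim\bg_0/2}.
\]
Moreover $\psi(\tr\langle X,X\rangle_0/2)=\psi_0(\langle X,X\rangle_0/2)$, so the two normalised sums coincide literally.

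For (4), I apply the determinant formula. Scaling the form by $a$ multiplies the Gram determinant by $a^{\dim\fg}$, so the Legendre symbol picks up the factor $\qd{a}{q}^{\dim\fg}$, while $\varepsilon(\psi)^{\dim V}$ is unchanged.
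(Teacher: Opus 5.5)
Your proof is correct and is the routine verification the paper leaves to the reader; the paper calls the lemma easy and gives no argument, and your steps are the natural ones: an isometry via $d\varphi$ for (1), an orthogonal splitting for (2), a literal identification of the two sums for (3), and the discriminant formula for (4). The one thing to drop is your ``alternative'' orthogonality argument in (2) via the centre of $\bG_i$, which yields nothing because that centre acts trivially on all of $\bg$; the perfectness argument $\langle\bg_i,\bg_j\rangle=\langle[\bg_i,\bg_i],\bg_j\rangle=\langle\bg_i,[\bg_i,\bg_j]\rangle=0$ is the one that does the work.
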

\begin{remark}
If $\bG$ is simple of even rank, (4) implies that $\gamma(\fg)$ is independent of the choice of $\langle\relbar,\relbar\rangle_{\bg}$. Changing $\langle\relbar,\relbar\rangle_{\bg}$ can result in a change of sign for $\gamma(\fg)$ in general.
\end{remark}
Since every connected reductive group over a finite field is quasi-split, we can choose a $\Fr$-stable Borel pair $(\bB, \bT)$ for $\bG$. Let $\bb$ and $\bt$ denote the Lie algebras and $B, T, \fb, \ft$ the $\Fr$-fixed points of the respective groups and Lie algebras.
\begin{lemma}\label{lem:weil-G-T}\leavevmode
\begin{enumerate}
\item
The restriction $\langle \relbar,\relbar\rangle\mapsto \langle \relbar,\relbar\rangle|_{\bt\times\bt}$ induces a bijection between the set of $(\bG,\Fr)$-invariant bilinear forms on $\bg$ and that of $(N_{\bG}(\bT), \Fr)$-invariant bilinear forms on $\bt$.
\item
Let $\ft$ be equipped with the bilinear form $\langle \relbar,\relbar\rangle|_{\ft\times \ft}$. Then, we have
\[
    \gamma(\fg) = \gamma(\ft).
\]
\end{enumerate}
\end{lemma}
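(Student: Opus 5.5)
The plan is to work with the root space decomposition of $\bg$ relative to the $\Fr$-stable torus $\bT$:
\[
\bg = \bt\oplus\bigoplus_{\alpha\in\Phi}\bg_\alpha .
\]
For part (1), injectivity comes from a standard fact. A $\bG$-invariant form pairs $\bg_\alpha$ with $\bg_\beta$ trivially unless $\alpha+\beta=0$, and it pairs $\bt$ only with $\bt$. This follows from $\bT$-invariance. On each plane $\bg_\alpha\oplus\bg_{-\alpha}$ the form is then determined by its values on $\bt$. Indeed, invariance gives
\[
\langle [e_\alpha,e_{-\alpha}], t\rangle = \langle e_\alpha,[e_{-\alpha},t]\rangle = \alpha(t)\langle e_\alpha,e_{-\alpha}\rangle .
\]
The element $[e_\alpha,e_{-\alpha}]$ is a nonzero multiple of the coroot $h_\alpha$, and $\alpha(h_\alpha)=2\neq 0$ since $p$ is odd. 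So $\langle e_\alpha,e_{-\alpha}\rangle$ is recovered from $\langle\relbar,\relbar\rangle|_{\bt}$.

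For surjectivity I would go back to the classical construction. Take an $N_{\bG}(\bT)$-invariant (that is, $W$-invariant) form on $\bt$ and define it on the root planes by the displayed formula. Invariance under $\bG$ can then be checked on the generators $\bT$ and the root subgroups $U_\alpha$, using the large $p$ assumption as needed; equivalently, one can invoke the known classification of invariant forms, which in good characteristic are determined by $W$-invariant forms on $\bt$. Compatibility with $\Fr$ is immediate on both sides, since the construction is canonical. This surjectivity is the step I expect to need the most care. To keep it short I would cite the standard reference rather than redo the computation.

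For part (2), I take $\Fr$-fixed points of the orthogonal decomposition
\[
\bg = \bt \oplus \bigoplus_{\{\alpha,-\alpha\}} (\bg_\alpha\oplus\bg_{-\alpha}).
\]
Here $\Fr$ permutes the pairs $\{\alpha,-\alpha\}$. Group these pairs into $\Fr$-orbits $O$, and let $V_O$ be the sum of the corresponding planes. Then $\fg = \ft\oplus\bigoplus_O V_O^{\Fr}$ is an orthogonal decomposition over $\F_q$, and the Weil index is multiplicative, as in Lemma~\ref{lem:reduction-weil}(2). It therefore suffices to show $\gamma(V_O^{\Fr})=1$ for each orbit $O$.

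For this I would exhibit a Lagrangian subspace. Suppose some $\Fr$-orbit of roots in $O$ does not contain both $\alpha$ and $-\alpha$. Then $O$ splits into two orbits $O^+$ and $-O^+$, and $\big(\bigoplus_{\beta\in O^+}\bg_\beta\big)^{\Fr}$ is an $\F_q$-rational isotropic subspace of half dimension. This makes $V_O^{\Fr}$ hyperbolic, so its Weil index is $1$, because the Gauss sum of a hyperbolic plane is $q$.

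The other case is when $\Fr^k\alpha=-\alpha$ for some $k$. Then $V_O^{\Fr}$ is the Weil restriction from $\F_{q^{2k}}$ down to $\F_q$ of a one-dimensional space, equipped with a hermitian-type form. Using Lemma~\ref{lem:reduction-weil}(3), its Weil index is $\gamma$ of a $2$-dimensional $\F_{q^k}$-space. That space is the norm form of $\F_{q^{2k}}/\F_{q^k}$ up to a scalar, and its discriminant is $-1$ times a non-square. A direct Gauss sum evaluation gives $\varepsilon^2\cdot(\text{sign})$, where $\varepsilon^2=\qd{-1}{q^k}$. Multiplying by the discriminant character therefore gives $1$. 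This is the only computation I would actually carry out, and it is routine. Alternatively, Lemma~\ref{lem:galois-twist} handles it directly: the twist of a hyperbolic plane by the swap has $\det=-1$, which is compensated by the anisotropic discriminant.
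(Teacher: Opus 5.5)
Part (1) of your proposal is fine; the paper simply calls it standard. Your first case in part (2) is also fine: an orbit $O=O^+\sqcup(-O^+)$ with the rational Lagrangian $\bigl(\bigoplus_{\beta\in O^+}\bg_\beta\bigr)^{\Fr}$ is exactly the paper's argument.

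\textbf{The second case is wrong.} Suppose $\Fr^k\alpha=-\alpha$. Identify $V_O^{\Fr}$ with $\bg_\alpha^{\Fr^{2k}}\cong\F_{q^{2k}}$. The quadratic form $X\mapsto\langle X,X\rangle/2$ then becomes $x\mapsto\tr_{\F_{q^k}/\F_q}\bigl(c\,N_{\F_{q^{2k}}/\F_{q^k}}(x)\bigr)$ for some $c\in\F_{q^k}^{\times}$. Set $\psi'=\psi\circ\tr_{\F_{q^k}/\F_q}$. The norm $N$ is surjective onto $\F_{q^k}^{\times}$ with fibres of size $q^k+1$, so the Gauss sum is
\[
1+(q^k+1)\sum_{b\in\F_{q^k}^{\times}}\psi'(cb)=-q^k .
\]
So the Weil index is $-1$, not $1$. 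Your own ingredients give the same answer:
\[
\varepsilon^2\qd{-\delta}{q^k}=\qd{-1}{q^k}\cdot\Bigl(-\qd{-1}{q^k}\Bigr)=-1 .
\]
Lemma~\ref{lem:galois-twist} also gives $\det(\sigma)\cdot 1=-1$ directly, with nothing to compensate.

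As written, your argument would therefore prove $\gamma(\fg)=\gamma(\ft)$ for every $\Fr$-stable maximal torus, and that is false. Take $\fsl_2(\F_q)$ with the trace form and $\delta$ a non-square. The split Cartan has $\langle X,X\rangle=2a^2$. The non-split Cartan $\bigl\{\begin{pmatrix}0&\delta a\\ a&0\end{pmatrix} : a\in\F_q\bigr\}$ has $\langle X,X\rangle=2\delta a^2$. Their Weil indices differ by $\qd{\delta}{q}=-1$. In general, each orbit of your second kind contributes a factor $-1$.

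\textbf{The missing observation.} The torus $\bT$ in the lemma is not an arbitrary $\Fr$-stable torus. It belongs to the $\Fr$-stable Borel pair $(\bB,\bT)$ fixed just before the lemma. Hence $\Fr$ permutes the positive roots, no root satisfies $\Fr^k\alpha=-\alpha$, and your second case is vacuous.

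Once you record this, the orbit decomposition is unnecessary. We have $\fg=\ft\oplus(\mathfrak u\oplus\mathfrak u^-)$ orthogonally. Here $\mathfrak u=\operatorname{Lie}(\bU)^{\Fr}$ is an $\F_q$-rational isotropic subspace of half dimension in $\mathfrak u\oplus\mathfrak u^-$. So $\mathfrak u\oplus\mathfrak u^-$ is hyperbolic with Weil index $1$, which is the paper's one-line proof.
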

\begin{proof}
Statement (i) is standard. Let $\bU$ denote the unipotent radical of $\bB$ and $\bU^-$ that of the opposite Borel $\bB^-$. Then, statement (ii) follows from the observation that $\mathfrak{u}\oplus\mathfrak{u}^-$ is a sum of copies of the hyperbolic plan, whose Weil index equals $1$.
\end{proof}
It is therefore enough to calculate $\gamma(\ft)$ for simply connected simple groups $\bG$ defined over $\F_q$ with any choice of non-degenerate $N_{\bG}(\bT)$-invariant bilinear form on $\bt$. 
\subsection{Quasi-split twisting}
Let $(\bB, \bT, \mathbf{x})$ be a pinned Borel pair for a simply connected semisimple Chevalley group $\bG$ defined over $\F_q$.  Let $\theta$ be an automorphism of the Dynkin diagram of $\bG$, which induces an automorphism of $(\bG,\bB, \bT, \mathbf{x})$. Define $(\li^\theta\bG, \li^\theta\bB, \li^\theta\bT) = (\bG, \bB, \bT)$ with the $\F_q$-structure twisted by $\theta$ and let $\li^{\theta}\Fr = \theta\circ\Fr$ denote the Frobenius morphism for $\li^{\theta}\bG$. Assume that $\bg$ is equipped with a $(\bG,\Fr)$-invariant symmetric bilinear form $\langle \relbar,\relbar\rangle$ which is also $\theta$-invariant. It induces via the identity $\bg = \li^\theta\bg$ a $(\li^{\theta}\bG,\li^{\theta}\Fr)$-invariant bilinear form $\langle\relbar,\relbar\rangle_{\li^\theta\bg}$ on $\li^\theta\bg$. 
\begin{lemma}\label{lem:quasi-split-twist}
We have
\[
\gamma(\li^\theta\fg) =  \sgn(\theta)\gamma(\fg),
\]
where $\sgn(\theta)\in\{\pm 1\}$ is the sign of $\theta$ as a permutation.
\end{lemma}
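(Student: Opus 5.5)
We reduce to the torus and then apply \autoref{lem:galois-twist}. By \autoref{lem:weil-G-T}(2), applied to $\bG$ with the Frobenius $\Fr$ and to $\li^\theta\bG$ with the Frobenius $\li^\theta\Fr$, it suffices to prove $\gamma(\li^\theta\ft) = \sgn(\theta)\gamma(\ft)$. Here $\ft = \bt^{\Fr}$ and $\li^\theta\ft = \bt^{\theta\Fr}$, and both carry the restriction of the same form $\langle\relbar,\relbar\rangle$ on $\bt$. The pinned Borel pair $(\bB,\bT,\mathbf{x})$ is $\theta$-stable, so the torus $\li^\theta\bT$ is the $\theta\Fr$-stable maximal torus appearing in a Borel pair for $\li^\theta\bG$. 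The lemma therefore applies legitimately to both forms.

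Next, $\theta$ acts $\F_q$-linearly on $\ft$. Since $\bG$ is split and $\theta$ preserves the pinning, $\theta$ commutes with $\Fr$. By hypothesis $\theta$ preserves $\langle\relbar,\relbar\rangle$, so its restriction $\sigma=\theta|_{\ft}$ lies in $\OO(\ft,\langle\relbar,\relbar\rangle)$. The Galois twist $\li^{\sigma}\ft = \bar{\ft}^{\sigma\Fr}$ is exactly $\bt^{\theta\Fr} = \li^\theta\ft$, with the restricted form. \autoref{lem:galois-twist} then gives
\[
\gamma(\li^\theta\ft) = \det(\theta|_{\ft})\,\gamma(\ft).
\]

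The remaining step, and the only real content, is the identity $\det(\theta|_{\bt}) = \sgn(\theta)$. Since $\bG$ is simply connected, the coroots $\alpha_i^\vee$ of the simple roots form a $\Z$-basis of $X_*(\bT)$. Hence $\bt = X_*(\bT)\otimes\overline{\F}_q$ has the basis $d\alpha_i^\vee(1)$. The automorphism $\theta$ permutes this basis exactly as it permutes the vertices of the Dynkin diagram. The determinant of a permutation matrix is the sign of the permutation, so $\det(\theta|_{\bt}) = \sgn(\theta)$.

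The point to be careful about is this basis claim in small characteristic: one needs the $d\alpha_i^\vee(1)$ to be linearly independent in $\bt$. This holds because $X_*(\bT)$ is spanned by the coroots in the simply connected case, so $\bt\cong X_*(\bT)\otimes\overline{\F}_q$ has them as a basis in every characteristic. Combining the three steps yields $\gamma(\li^\theta\fg)=\sgn(\theta)\gamma(\fg)$.
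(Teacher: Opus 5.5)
Your proposal is correct and follows the paper's route: the paper's proof is the one-line remark that the statement follows from \autoref{lem:galois-twist}, and you make explicit what it leaves implicit, namely the choice $\sigma=\theta|_{\ft}$ (after reducing to the torus via \autoref{lem:weil-G-T}) and the identity $\det(\theta|_{\bt})=\sgn(\theta)$ coming from the simple coroot basis of $X_*(\bT)$. Applying \autoref{lem:galois-twist} directly to $\fg$ would work equally well: $\theta$ preserves $\mathfrak{u}$ and $\mathfrak{u}^-$, which the form puts in duality, so $\det(\theta|_{\mathfrak{u}\oplus\mathfrak{u}^-})=1$.
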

\begin{proof}
It follows immediately from~\autoref{lem:galois-twist}.
\end{proof}

\subsection{Weil indices for simple Lie algebras}\label{ssec:Weil-simple}
Via~\autoref{lem:weil-G-T}, the calculation of Weil indices for simple Lie algebras is reduced to the maximal torus $\bt$ of a $\Fr$-stable Borel pair. We shall choose a non-degenerate invariant symmetric bilinear form on the simple Lie algebra of each type. \par
For the split group $\SL(n)$, we choose $(2n)$-times the trace pairing: $(X, Y)\mapsto 2n\tr(XY)$. For the split groups $\Spin(2n+1), \Sp(2n)$ and $\Spin(2n)$, we choose the trace pairing $(X, Y)\mapsto \tr(XY)$. Then, in these cases
\[
    \gamma(\fsl(n)) = \varepsilon(\psi)^{n-1},\;\gamma(\fspin(2n+1)) = \gamma(\fsp(2n)) = \gamma(\fspin(2n)) = \varepsilon(\psi)^n.
\]
For the quasi-split groups $\SU_{\F_{q^2}/\F_q}(n)$ and $\Spin_{\F_{q^2}/\F_q}(2n)$, we apply \autoref{lem:quasi-split-twist} to $\fsl(n)$ and $\fspin(2n)$ respectively and get
\[
    \gamma(\fsu_{\F_{q^2}/\F_q}(n)) = (-1)^{\lfloor \frac{n-1}{2}\rfloor}\varepsilon(\psi)^{n-1},\; \gamma(\fspin_{\F_{q^2}/\F_q}(2n)) = -\varepsilon(\psi)^n.
\]
For type $G_2$, the symmetrised Cartan matrix is
\[
Q = \begin{pmatrix}2 & -1 \\ -3 & 2\end{pmatrix}\begin{pmatrix}1 & 0 \\ 0 & 3\end{pmatrix} = \begin{pmatrix}2 & -3 \\ -3 & 6\end{pmatrix}.
\]
Then, 
\[
    \langle X,Y\rangle = \sum_{i,j} Q_{i,j}\omega_i(X)\omega_j(Y)
\]
defines a $N_{\bG}(\bT)$-invariant non-degenerate bilinear form on $\bt$. It follows that
\[
    \gamma(G_2) = \left(\frac{\det (Q/2)}{q}\right)\varepsilon(\psi)^{\operatorname{rank}G_2} = \left(\frac{3}{q}\right)\left(\frac{-1}{q}\right) = \left(\frac{q}{3}\right).
\]
The same method defines a bilinear form and allows to compute the Weil indices for $E_{6,7,8}$ and $F_4$:
\[
        \gamma(E_6)  = \left(\frac{q}{3}\right),\; \gamma(E_7)  = \varepsilon(\psi)^{-1},\;  \gamma(E_8) = \gamma(F_4) = 1.
\]
Finally,~\autoref{lem:quasi-split-twist} yields
\[
    \gamma(\li^2E_6) =  \gamma(E_6) = \left(\frac{q}{3}\right),\; \gamma(\li^3D_4) =  \gamma(\fspin(8)) = 1.
\]
\begin{remark}\label{rem:E8F4G2}
The above formulae for $\gamma(E_8)$, $\gamma(F_4)$ and $\gamma(G_2)$ agree with those for the sign $\varepsilon$ in~\cite[3.3.10]{Kaw86} (see also~\cite[3.6]{Gec99}). The formula for $\gamma(\fsl(n))$ agrees with the one for $\zeta_{\mathscr{I}}^{-1}$ in~\cite[p. 141]{DLM97} in the case $e=n$, up to a misprint from them --- their formula in the case where $e$ is even should read
\[
\zeta_{\mathscr{I}}^{-1} = q^{\frac{n(1-e)}{2e}}((-1)^{\frac{q-1}{2}} q)^\frac{n(e-2)}{2e}\mathscr{G}(\varepsilon)^{\frac{n}{e}},
\]
and the right-hand side equals $\varepsilon(\psi)^{n(e-1)/e}$ in our notation. As for $\fg = \fsp(k(k+1)/2), \fso(k^2)$ for $k\ge 1$, the formulae for $\gamma(\fg)$ agree with the formulae for $\li^o\gamma^{-1}$ in~\cite[V.8]{Wal01} --- note that $\gamma(\fso(k^2)) = \gamma(\fspin(k^2))=1$ when $k$ is odd since the rank $n=(k^2-1)/2$ is divisible by $4$ in this case; similarly, the rank $n=k^2/2$ is even if $k$ is even.
\end{remark}

\subsection{The space \texorpdfstring{$\FC(\fg)$}{FC(g)}}\label{ssec:FC}
Let $\FC(\fg)$ be as defined in~\autoref{sec:intro}. If $Z(\bg) \neq 0$, we have clearly $\FC(\fg) = 0$, so let us assume here that $\bg$ is semisimple. For every $\Fr$-stable ($\Ql$-)cuspidal pair $(\cO, \scC)$ on $\bg$ (in the sense of~\cite{Lus84}), we fix a Weil structure $\Fr_{\scC}:\Fr^* \scC\xrightarrow \to \scC$ and consider the characteristic function 
\[
\chi_{\scC}\in C(\fg),\; \chi_{\scC}(X) = \begin{cases}0 & \text{if $X\in \fg\setminus \cO^{\Fr}$,} \\ \tr(\Fr_{\scC}, \scC_{X}) & \text{if $X\in \cO^{\Fr}$.}\end{cases}
\]
Then, we have $\chi_{\scC}\in \FC(\fg)$, the line $\C\chi_{\scC}$ is independent of the choice of $\Fr_{\scC}$, and $\{\chi_{\scC}\}_{(\cO,\scC)}$ form a basis of $\FC(\fg)$ when $(\cO,\scC)$ runs over the $\Fr$-stable cuspidal pairs on $\bg$ up to isomorphism. 

\subsection{Lusztig constants}\label{ssec:lusztig}
Given an $\Fr$-stable cuspidal pair $(\cO, \scC)$ equipped with a Weil structure $\Fr^*\scC\xrightarrow{\sim} \scC$ as in~\autoref{ssec:FC}, the Fourier--Deligne transform $\operatorname{FD}(\IC(\scC))$ comes with the induced Weil structure. By the Fourier-invariance of $\IC(\scC)$, and since any two Weil structures on $\IC(\scC)$ are proportional, we have
\[
    \widehat{\chi_{\scC}} = \zeta(\scC)\chi_{\scC}
\]
for some constant $\zeta(\scC)\in \Cc$, called the \emph{Lusztig constant} of $(\cO, \scC)$. It is obvious that $\zeta(\scC)$ does not depend on the choice of Weil structure on $\scC$. By the involutivity of Fourier transform, we see that $\zeta(\scC)^4 = 1$. 
We leave the proof of the following easy lemma to the reader:
\begin{lemma}\label{lem:reduction-lusztig}\leavevmode
\begin{enumerate}
\item 
If $\alpha:\bG'\to \bG$ is an isogeny and $(\cO', \scC') = \alpha^*(\cO,\scC)$ is the induced cuspidal pair, then $\zeta(\scC') = \zeta(\scC)$.
\item
If $G = G_1\times \cdots\times G_r$ and $(\cO,\scC) \cong (\cO_1,\scC_1)\boxtimes \cdots\boxtimes (\cO_r,\scC_r)$, then 
\[
\zeta(\scC) = \prod_{i = 1}^r \zeta(\scC_i).
\]
\item
If $\bG = \Res_{\F_{q^r}/\F_q}\bG'$, $\langle\relbar,\relbar\rangle_{\bg} = \tr_{\F_{q^r}/\F_q}\circ\langle\relbar,\relbar\rangle_{\bg'}$ and $(\cO, \scC)$ is induced from a $\Fr$-stable cuspidal pair $(\cO',\scC')$ on $\bG'$ by the Weil restriction, then $\zeta_{\psi}(\scC) = \zeta_{\psi'}(\scC')$, where $\psi' = \psi\circ\tr_{\F_{q^r}/\F_q}:\F_{q^r}\to \Cc$.
\end{enumerate}\qed
\end{lemma}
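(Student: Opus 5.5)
The plan is to prove each of the three statements directly from the definitions. The key observation is that every operation involved (isogeny pullback, external tensor product, Weil restriction) is compatible both with the characteristic functions $\chi_{\scC}$ of~\autoref{ssec:FC} and with the Fourier transform on the corresponding finite Lie algebras. Since $\zeta(\scC)$ is defined by $\widehat{\chi_{\scC}} = \zeta(\scC)\chi_{\scC}$ and does not depend on the choice of Weil structure, in each case it suffices to exhibit one Weil structure on $\scC$ whose characteristic function corresponds to that of the data on the other side. One then checks that the Fourier transforms correspond under the same identification, and reads off the constant.

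For (2), take the Weil structure on $\scC$ to be the tensor product of Weil structures on the $\scC_i$. Then $\chi_{\scC} = \chi_{\scC_1}\otimes\cdots\otimes\chi_{\scC_r}$ as a function on $\fg = \fg_1\oplus\cdots\oplus\fg_r$. Here I use that the invariant form is an orthogonal sum, which holds because distinct simple factors are orthogonal under any invariant form. The Fourier transform therefore factors as the tensor product of the Fourier transforms on the $\fg_i$, with the normalisations $q^{-\dim\fg_i/2}$ multiplying to $q^{-\dim\fg/2}$. Applying the defining relation on each factor gives $\zeta(\scC) = \prod_i\zeta(\scC_i)$.

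For (1), I will use the hypothesis $p>3(h-1)$, which is in force throughout the paper and is certainly large enough for $d\alpha:\bg'\to\bg$ to be an isomorphism. The isogeny then identifies the nilpotent cones and the $\Fr$-points $\fg'\cong\fg$. Pulling back the form gives $\langle\relbar,\relbar\rangle_{\bg'}$ and $\alpha^*\scC = \scC'$ with the pulled-back Weil structure, so $\chi_{\scC'} = \chi_{\scC}\circ d\alpha$. The Fourier transforms agree under this linear isometry, and hence $\zeta(\scC') = \zeta(\scC)$. The one small point to verify is that a $\bG$-equivariant cuspidal local system pulls back to the cuspidal pair $\alpha^*(\cO,\scC)$ on $\bg'$; this is part of the meaning of "induced" in the statement.

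For (3), I use the identification $\fg = \bg(\F_q) = \bg'(\F_{q^r}) = \fg'$, under which $\langle X,Y\rangle_{\fg} = \tr_{\F_{q^r}/\F_q}\langle X,Y\rangle_{\fg'}$. Consequently $\psi(\langle X,Y\rangle_{\fg}) = \psi'(\langle X,Y\rangle_{\fg'})$. The normalisations also match, since $q^{\dim_{\F_q}\fg/2} = (q^r)^{\dim_{\F_{q^r}}\fg'/2}$, so the $\psi$-Fourier transform on $\fg$ coincides with the $\psi'$-Fourier transform on $\fg'$. Geometrically, $\bG_{\overline{\F}_q} = \prod_{i\in\mathbb{Z}/r}\bG'$, where $\Fr$ permutes the factors cyclically and $\Fr^r$ restricts to the Frobenius $\Fr'$ of $\bG'$. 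The induced cuspidal pair is $\boxtimes_i (\Fr^{i})^*(\cO',\scC')$, and a Weil structure on $\scC'$ for $\Fr'$ induces one on $\scC$. The main obstacle, though still routine, is checking that the resulting trace function satisfies $\chi_{\scC}(X) = \chi_{\scC'}(X)$ under $\fg\cong\fg'$. This is the standard Shapiro-type trace computation: on the stalk at a diagonal point, $\Fr$ acts by cyclically permuting tensor factors composed with the Weil structure. The trace of such an operator on a tensor product equals the trace of the composite $\Fr^r$ on a single factor, namely $\tr(\Fr'_{\scC'},\scC'_X)$. Granting this identity, the Fourier relation on $\fg'$ with $\psi'$ transports to $\fg$ with $\psi$, which gives $\zeta_\psi(\scC) = \zeta_{\psi'}(\scC')$.
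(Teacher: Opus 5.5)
Your proposal is correct. It is the routine verification the paper leaves to the reader, since the paper gives no proof of this lemma: in each case you match a Weil structure, its trace function, and the normalised Fourier transform under the evident identification ($d\alpha$ for (1), the orthogonal decomposition for (2), and $\bg(\F_q)=\bg'(\F_{q^r})$ with the cyclic-permutation trace identity for (3)). One small precision in (1): $d\alpha$ is an isomorphism because $\alpha$ is a \emph{central} isogeny whose kernel has order prime to $p$, which large $p$ guarantees. Large $p$ alone does not suffice, since an inseparable isogeny such as a Frobenius has $d\alpha=0$; central isogenies are clearly what the statement intends.
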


\section{Weil indices and Lusztig constants for \texorpdfstring{$p$}{p}-adic Lie algebras}
Let $F$ be a non-archimedean local field of characteristic $0$ with residue field $\F_q$. Let $\cO_F\subset F$ be the ring of integers and $\fm_F\subset \cO_F$ the maximal ideal. Let $F^{\mathrm{alg}}$ be an algebraic closure of $F$ and $\Gamma_F = \operatorname{Gal}(F^{\mathrm{alg}}/F)$ the absolute Galois group of $F$.
\subsection{Fourier transform and Weil indices}
Choose an additive character $\psi: F\to \Cc$ of conductor $\fm_F$ (namely, $\psi(\fm_F) = 1$ and $\psi|_{\cO_F}\neq 1$). It induces a character $\psi_0:\F_q\to \Cc$ by restriction to $\cO_F$ and reduction modulo $\fm_F$. Given a $F$-vector space $V$ and $\cO_F$-lattices $V_{\cO}^+\subseteq V_{\cO}\subseteq V$ such that $V_{\cO}^+\subseteq \fm_F V_{\cO}$, we normalise the Haar measure $d_V$ on $V$ such that $d_V(V_{\cO}) = |V_{\cO}/V^+_{\cO}|^{1/2}$. The Fourier transform is defined by
\[
(f\mapsto \hat f): C^{\infty}_c(V)\to C^{\infty}_c(V^*), \quad \hat f(X) = \int_{V}f(X)\psi(\langle X, Y\rangle) d_VY.
\]
We equip $V^*$ with the dual Haar measure, so that $\hat{\hat f}(X) = f(-X)$. Set 
\[
    V_{\cO}^{\perp} = \{X\in V^*\;;\; X(V_{\cO})\subseteq \fm_F\}. 
\]
Then, the quotient $(V^+_{\cO})^{\perp}/(V_{\cO})^{\perp}$ can be identified with the $\F_q$-linear dual of $V_{\cO} / V^+_{\cO}$. The normalisation of Haar measure is made so that the Fourier transform commutes with the inclusions $C(V_{\cO}/V^+_{\cO}) \subset C^{\infty}_c(V)$ and $C((V_{\cO}/V^+_{\cO})^*) \subset C^{\infty}_c(V^*)$. \par
Let $Q:V\times V\to F$ be a non-degenerate symmetric bilinear form. Upon choosing a Haar measure $d_V$ on $V$ and an $\cO_F$-lattice $V_{\cO}\subseteq V$ which is co-isotropic with respect to $B$, namely $(V_{\cO})^{\perp}\subseteq V_{\cO}$, we set
\[
    G_{\psi}(V_{\cO},Q) = \int_{V_{\cO}} \psi\left(\frac{Q(X,X)}{2}\right) d_VX.
\]
Then, we have $|G_{\psi}(V_{\cO},Q)| = (d_V(V_{\cO})d_V((V_{\cO})^{\perp}))^{1/2}$ and the quotient
\[
    \gamma_{\psi}(V, Q) = |G_{\psi}(V_{\cO},Q)|^{-1}G_{\psi}(V_{\cO},Q) 
\]
is independent of the choice of the co-isotropic $\cO_F$-lattice $V_{\cO}$. The number $\gamma_{\psi}(V, Q)$ is called the \emph{Weil index} of $(V,Q)$. See~\cite[Appendix A]{Ran93} and~\cite[VIII.1]{Wal95} for references. We will sometimes suppress $\psi$ or $Q$ or both from the notation.
\subsection{\texorpdfstring{$p$}{p}-Adic Lie algebras}\label{ssec:p}
Let $\bG$ be a connected reductive group defined over $F$. We assume that $p \ge \sup(2h(\bG) +1, \rank \bG)$, so that $\bG$ splits over a finite extension of $F$ of $p$-prime order, which is thus tame. We will also denote denote $\bG = \bG(F^{\mathrm{alg}})$ the group of $F^{\mathrm{alg}}$-points and $G = \bG(F) = \bG^{\Gamma_F}$ the group of $F$-points. Similarly, we let $\bg$ denote the Lie algebra of $\bG$ and set $\fg = \bg^{\Gamma_F}$. 

Let $\Bred(G)$ denote the reduced Bruhat--Tits building of $G$. Given any facet $\cF\subset\Bred(G)$, let $\fg_{\cF,0}$ and $\fg_{\cF,0+}$ denote respectively the parahoric subalgebra of $\fg$ and its radical associated with $\cF$. We consider $(\bG,\Gamma_F)$-invariant symmetric bilinear forms $\langle\relbar,\relbar\rangle$ on $\bg$ satisfying the following property: 
\begin{equation}\label{eq:normalised}
(\fg_{\cF,0})^{\perp}  = \fg_{\cF,0+} \quad \text{for every $\cF\in \Bred(G)$} \tag{*}.
\end{equation}
\begin{lemma}\label{lem:descent}
The property~\eqref{eq:normalised} is stable under Galois extension and Galois descent: if $E/F$ is a finite Galois extension, then $\langle\relbar,\relbar\rangle$ satisfies~\eqref{eq:normalised} for $\fg$ if and only if it satisfies~\eqref{eq:normalised} for $\fg_E = \bg(E)$. 
\end{lemma}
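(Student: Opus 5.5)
Both directions will be proved at once by describing parahoric subalgebras and their radicals for $\fg_E$ in terms of those for $\fg$, and comparing orthogonal complements.

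\textbf{Step 1: buildings and parahoric lattices.} Since $p \ge 2h(\bG)+1$, the group $\bG$ splits over a tame extension, and tame Galois descent for buildings (Rousseau, Prasad--Yu) applies. It gives $\Bred(G) = \Bred(G_E)^{\operatorname{Gal}(E/F)}$, after rescaling the affine structure by the ramification index. It also gives, for each point $x\in\Bred(G)$ and each $r$, the Moy--Prasad compatibility
\[
\fg_{x,r} = \fg_{E,x,r}\cap\fg .
\]
A facet $\cF$ of $\Bred(G)$ lies in a unique minimal facet $\cF_E$ of $\Bred(G_E)$. The lattices attached to $\cF$ are then
\[
\fg_{\cF,0} = \fg_{E,\cF_E,0}^{\operatorname{Gal}(E/F)}, \qquad \fg_{\cF,0+} = \fg_{E,\cF_E,0+}^{\operatorname{Gal}(E/F)} .
\]
Conversely, every facet of $\Bred(G_E)$ is, after replacing $E$ by a larger Galois extension, of this form up to the action of $\operatorname{Gal}(E/F)$. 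This step is standard input and will be quoted rather than reproved.

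\textbf{Step 2: compare orthogonal complements.} Fix a $\operatorname{Gal}(E/F)$-stable $\cO_E$-lattice $L_E\subseteq\fg_E$ with fixed points $L=L_E^{\operatorname{Gal}}$. Tameness means $E/F$ has degree prime to $p$. For such extensions, $L_E = L\otimes_{\cO_F}\cO_E$ holds in the unramified case. In the tamely ramified case, $L_E$ is recovered as a sum of graded pieces $\varpi_E^{i}(\cdot)$, and the trace $\tr_{E/F}(\fm_E^{k})$ is computed exactly in terms of $\fm_F$.

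One then checks the identity
\[
(L_E^{\perp_E})^{\operatorname{Gal}} = L^{\perp_F},
\]
where $\perp_E$ is taken with respect to $\fm_E$ and $\perp_F$ with respect to $\fm_F$. This uses the Galois-invariance of the form and the fact that $\tr_{E/F}\colon\cO_E\to\cO_F$ is surjective when $p\nmid[E:F]$. It is important that $\perp$ is defined by pairing into the maximal ideal, since this makes the normalisation compatible with ramification.

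Apply this to $L_E=\fg_{E,\cF_E,0}$. If (*) holds over $E$, then $\fg_{E,\cF_E,0}^{\perp} = \fg_{E,\cF_E,0+}$. Taking Galois fixed points gives $\fg_{\cF,0}^{\perp}=\fg_{\cF,0+}$, so (*) holds over $F$.

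For the converse, use the Moy--Prasad filtration over $E$, namely $\fg_{E,x,r}^{\perp}=\fg_{E,x,(-r)+}$ up to the shift by $\fm$. Assuming (*) over $F$, this determines the complement over $E$ at every point of $\Bred(G)$. Every facet of $\Bred(G_E)$ is $G_E$-conjugate to one meeting an apartment of a maximal $F$-split torus containing Galois-fixed points. By the $G_E$-invariance of the form, the statement then propagates to all facets.

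\textbf{Main obstacle.} The difficulty is the ramified case. There the fixed points of $\fg_{E,\cF_E,0+}$ need not be the radical of a facet but rather some deeper lattice $\fg_{x,1/e}$. One has to check that the duality $r\leftrightarrow -r$ matches $0$ with $0+$ exactly. I would handle this by working with Moy--Prasad filtrations indexed by real numbers. On that scale, descent is simply the statement $\fg_{x,r}=\fg_{E,x,r}\cap\fg$, and (*) is equivalent to the duality $\fg_{x,r}^{\perp}=\fg_{x,(-r)+}$ for all $x$ and $r$. That duality is visibly preserved in both directions.
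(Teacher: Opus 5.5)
\textbf{The direction ``(*) over $E$ $\Rightarrow$ (*) over $F$'' works for tame $E/F$, but the converse has a genuine gap, and the paper avoids building descent altogether.}

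\textbf{What works.} Your identity $(L_E^{\perp_E})^{\Gamma}=(L_E^{\Gamma})^{\perp_F}$ is correct for tame $E/F$. This is because $\tr_{E/F}(L_E)=L_E^{\Gamma}$ and $\tr_{E/F}(\fm_E^k)=\fm_F^{\lceil k/e\rceil}$. Combined with $\fg_{E,x,0+}\cap\fg=\fg_{x,0+}$, it gives (*) over $F$ at every $x\in\Bred(G)$. You should argue pointwise here: in the ramified case a facet of $\Bred(G)$ need not lie in a single facet of $\Bred(G_E)$. Also, the lemma allows an arbitrary finite Galois $E/F$. Tameness of $E/F$ does not follow from $p\ge 2h(\bG)+1$, yet your whole argument depends on it.

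\textbf{The gap in the converse.} There are three problems.
\begin{enumerate}
\item Invoking $\fg_{E,x,r}^{\perp}=\fg_{E,x,(-r)+}$ is circular. That duality is known for a specially normalised form; for the given form it is exactly what must be proved.
\item Even at a Galois-fixed point, depth $0$ over $F$ is not enough. Take $E/F$ totally tamely ramified with $\varpi_E^e\in F$. Then $\fg_{E,x,0}=\bigoplus_{0\le i<e}\varpi_E^i\fg_{x,-i/e}$, so $\fg_{E,x,0}^{\perp_E}$ is governed by the lattices $\fg_{x,i/e}^{\perp_F}$ for $0\le i<e$. But (*) over $F$ only speaks about depth $0$. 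Your claim that (*) is equivalent to duality at all depths is precisely the missing content; it is not ``visible''. Incidentally, $\fg_{E,x,0+}^{\Gamma}=\fg_{x,0+}$ exactly, so there is no ``deeper lattice'' issue at $0+$.
\item Propagation by $G_E$-conjugacy fails. Take the unramified quasi-split $\SU_4$, with $E$ the unramified quadratic extension. Galois acts on vertex types of the building of $\SL_4(E)$ by $i\mapsto -i \bmod 4$, while $\GL_4(E)$ acts by translations. Hence an edge with vertex types $\{0,1\}$ is not conjugate, even under the adjoint group, to any Galois-stable facet, so to none meeting $\Bred(G)$. Its parahoric is not even Galois-stable. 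So it cannot be obtained by sums or intersections from the lattices attached to points of $\Bred(G)$.
\end{enumerate}

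\textbf{The missing idea.} The paper uses rigidity: (*) pins down the form up to a unit on each simple factor. Concretely, in a split apartment, (*) at a single point of an open alcove already forces $\ft_0$ to be self-dual and each $\langle X_\alpha,X_{-\alpha}\rangle$ to be a unit. Conversely, these two conditions give $\fg_{y,r}^{\perp}=\fg_{y,(-r)+}$ at every point $y$ of the apartment and every depth $r$, by a direct check.

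The paper combines this rigidity with the existence (Adler--Roche) of one form satisfying (*) over every $E$ simultaneously. Then both directions are immediate: a form satisfying (*) for one field is a unit multiple of the universal one, hence satisfies (*) for all fields. No Galois descent for buildings or Moy--Prasad filtrations is needed.
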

\begin{proof} Thanks to \cite[Proposition 4.1]{AR00} and its proof by reducing to the split case, a form $\langle\relbar,\relbar\rangle$ satisfying ~\eqref{eq:normalised} for all finite Galois extension $E/F$ exist. Our assumption on $p$ ensures that $\bg$ is the product of the simple $F$-factors. The property ~\eqref{eq:normalised} pins down $\langle\relbar,\relbar\rangle$ (if exists) on each simple factor up to a scalar in $\cO_F$, and therefore any other choice of $\langle\relbar,\relbar\rangle$ satisfies ~\eqref{eq:normalised} for one $E/F$ iff it does for all $E/F$.
\end{proof}
When $\bG$ is simple, the set of $(\bG,\Gamma_F)$-invariant symmetric bilinear form $\langle\relbar,\relbar\rangle$ on $\bg$ satisfying~\eqref{eq:normalised} is an $\cO_F^{\times}$-torsor, and each form induces a non-degenerate $(\bG_{\cF},\Fr)$-invariant symmetric bilinear form on the reductive Lie algebra $\bg_{\cF}$ over $\F_q$ associated with any facet $\cF\subset \Bred(G)$: given an unramified Galois extension $E/F$ of degree $r\ge 1$, let $G_E = \bG(E)$, $\fg_E = \bg(E)$ and $\cF_E \subset \Bred(G_E)$ the unique facet containing $\cF$; then $\fg_{E,\cF_E,0+}$ is isotropic with respect to the bilinear form $\langle \relbar,\relbar\rangle|_{\fg_E\times\fg_E}$ and $(\fg_{E,\cF_E,0+})^{\perp} = \fg_{E,\cF_E,0}$; it follows that $\langle \relbar,\relbar\rangle|_{\fg_E\times\fg_E}$ induces by modulo $\fm_E$ a bilinear form on the quotient space $\fg_{E,\cF_E,0}/\fg_{E,\cF_E,0+} \cong \bg_{\cF}(\F_{q^r})$; letting $r$ run over positive integers, we obtain a bilinear form on $\bg_{\cF}$ satisfying the required properties.
\begin{lemma}\label{lem:normalised-torus}
Given a maximal subtorus $\bT$ of $\bG$ defined over $F$, the restriction 
\[
\langle\relbar,\relbar\rangle\mapsto \langle\relbar,\relbar\rangle|_{\bt\times \bt}
\]
induces a bijection from the set of $(\bG,\Gamma_F)$-invariant symmetric bilinear forms on $\bg$ satisfying~\eqref{eq:normalised} to the set of $(N_\bG(\bT),\Gamma_F)$-invariant symmetric bilinear forms on $\bt$ satisfying~\eqref{eq:normalised}.
\end{lemma}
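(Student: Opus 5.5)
The plan is to separate the statement into two parts: the purely algebraic bijection of invariant forms, and the compatibility with condition~\eqref{eq:normalised}. The latter will be reduced, via Galois descent, to a split computation in an apartment.

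\emph{Step 1: the bijection without~\eqref{eq:normalised}.} Over $F^{\mathrm{alg}}$, restriction to $\bt$ identifies $\bG$-invariant symmetric bilinear forms on $\bg$ with $N_{\bG}(\bT)$-invariant ones on $\bt$. This is the same standard fact as in~\autoref{lem:weil-G-T}(1), valid since $p$ is large: $\bg$ is the sum of its simple factors and $W$ acts irreducibly on each factor of $\bt$. The correspondence is canonical, hence $\Gamma_F$-equivariant. Taking $\Gamma_F$-invariants therefore gives the bijection for $(\bG,\Gamma_F)$- and $(N_{\bG}(\bT),\Gamma_F)$-invariant forms.

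\emph{Step 2: reduction to the split case.} Choose a finite tame Galois extension $E/F$ splitting both $\bT$ and $\bG$; this is possible under the assumption on $p$. By~\autoref{lem:descent}, condition~\eqref{eq:normalised} for $\fg$ is equivalent to the same condition for $\fg_E$. I will prove the analogous descent statement for the torus: for tame $E/F$ one has $\ft_{E,0}\cap\ft=\ft_0$ and $\ft_{E,0+}\cap\ft=\ft_{0+}$, with $e(E/F)$ prime to $p$. The orthogonals then correspond under the trace, exactly as in the proof of~\autoref{lem:descent}. So it suffices to treat $\bG$ split and $\bT$ split over $F$. In that case the facet of $\bT$ is trivial, $\ft_0=X_*(\bT)\otimes\cO_F$ and $\ft_{0+}=X_*(\bT)\otimes\fm_F$. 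Condition~\eqref{eq:normalised} for $\ft$ then says precisely that the form is $\cO_F$-valued and perfect modulo $\fm_F$ on $X_*(\bT)\otimes\cO_F$.

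\emph{Step 3: the split computation.} Let $x$ lie in the apartment of $\bT$. Then
\[
\fg_{x,0}=\ft_0\oplus\bigoplus_\alpha \fm_F^{\lceil-\alpha(x)\rceil}e_\alpha, \qquad \fg_{x,0+}=\ft_{0+}\oplus\bigoplus_\alpha \fm_F^{\lfloor-\alpha(x)\rfloor+1}e_\alpha .
\]
By invariance, $\bt\perp\bg_\alpha$ and $\bg_\alpha\perp\bg_\beta$ unless $\beta=-\alpha$. Moreover the identity $\langle h,[e_\alpha,e_{-\alpha}]\rangle=\alpha(h)\langle e_\alpha,e_{-\alpha}\rangle$ gives $\langle e_\alpha,e_{-\alpha}\rangle=\tfrac12\langle h_\alpha,h_\alpha\rangle$, where $h_\alpha$ is the coroot. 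Since $1-\lceil\alpha(x)\rceil=\lfloor-\alpha(x)\rfloor+1$, condition~\eqref{eq:normalised} at every $x$ is equivalent to the conjunction of two facts:
\begin{enumerate}
\item the form is perfect on $X_*(\bT)\otimes\cO_F$;
\item $\langle h_\alpha,h_\alpha\rangle\in\cO_F^\times$ for every root $\alpha$.
\end{enumerate}
Fact~(1) is exactly the torus condition from Step~2.

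\emph{Step 4 (the main obstacle).} It remains to show that (1) implies (2). On each simple factor the form is $c$ times the normalised $W$-invariant form, with short coroots of length $2$. The discriminant of that normalised form on $X_*$ involves only primes dividing $2$, $3$ and the index of connection, and these are all less than $p$ by the hypothesis on $p$. Hence (1) forces $c\in\cO_F^\times$. The coroot lengths are then $c$ times $2$, $4$ or $6$, which are units. I expect the real care to be needed here: one has to bound the primes occurring in these discriminants uniformly in the type, and to handle $X_*$ being an arbitrary lattice between the coroot and coweight lattices. The check will be type by type, using $p>\rank\bG$ together with $p>2h(\bG)$.
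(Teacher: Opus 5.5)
Your route is valid, but in the split case it differs from the paper's. The paper uses only the hyperspecial vertex $s$ attached to the split $\bT$. There $\fg_{s,0}=\fg(\cO_F)$ and $\ft_{s_T,0}=\ft(\cO_F)$, and the orthogonal splitting $\ft\oplus(\mathfrak{u}\oplus\mathfrak{u}^-)$ of \autoref{lem:weil-G-T} shows that restriction sends forms on $\bg$ satisfying~\eqref{eq:normalised} to forms on $\bt$ satisfying it. Both sets are $\cO_F^\times$-torsors (existence from \cite{AR00}, uniqueness up to units), and restriction is $\cO_F^\times$-equivariant, so it is a bijection. You instead check~\eqref{eq:normalised} at every point of the apartment via $\langle e_\alpha,e_{-\alpha}\rangle=\tfrac12\langle h_\alpha,h_\alpha\rangle$. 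This reduces it to unimodularity on $X_*(\bT)\otimes\cO_F$ plus unit coroot lengths, and you deduce the latter from the former by a discriminant count. That count does work. The determinant of the symmetrised Cartan matrix involves only $2$, $3$ and the index of connection, which is $h$ in type $A$ and at most $4$ otherwise. Also $p\nmid[P^\vee:Q^\vee]$ lets you replace $X_*(\bT)$ by $Q^\vee$ after tensoring with $\cO_F$. Your approach is longer and needs type-by-type input, but it is self-contained: it reproves in the split case the existence of normalised forms that the paper imports from \cite{AR00}.

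The step that does not work as written is the torus descent in Step 2. The proof of \autoref{lem:descent} is not a trace argument; it is the existence-plus-uniqueness argument, so there is nothing there to copy. A trace argument gives only one direction. For $X\in\ft$ and $Y\in\ft_{E,0}$ one has $\tr_{E/F}\langle X,Y\rangle=\langle X,\sum_{\gamma}\gamma Y\rangle$. Since the different of the tame extension $E/F$ is $\fm_E^{e-1}$, we get $\tr_{E/F}(\fm_E^k)\subseteq\fm_F$ if and only if $k\ge 1$. Hence~\eqref{eq:normalised} over $E$ implies~\eqref{eq:normalised} over $F$ for the torus.

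The converse, which you need for surjectivity, does not follow this way. For ramified $\bT$ the lattice $\ft_{E,0}$ is not the $\cO_E$-span of $\ft_0$: for a ramified norm-one torus, $\ft_0=\ft_{0+}$ and $\cO_E\ft_0=\fm_E\ft_{E,0}$. Moreover, $\Gamma$-invariants do not determine a $\Gamma$-stable $\cO_E$-lattice.

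The fix is the uniqueness-up-to-units observation behind the paper's argument. For $\bG$ simple, every $(N_{\bG}(\bT),\Gamma_F)$-invariant form on $\bt$ is $cB_0$, with $B_0$ the normalised form. By your Steps 3--4, $B_0$ satisfies~\eqref{eq:normalised} over $E$, hence over $F$ by the trace direction. If $cB_0$ also satisfies it over $F$, then $\ft_{0+}$ equals the $cB_0$-orthogonal of $\ft_0$, which is $c^{-1}$ times its $B_0$-orthogonal $\ft_{0+}$. This forces $c\in\cO_F^\times$, so $cB_0$ satisfies~\eqref{eq:normalised} over $E$ as well.
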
 
\begin{proof}
When $\bT$ is a split torus, we can choose an integral model of $\bG$ such that $\bT\subset\bG$ is also a maximal subtorus over $\cO_F$. There is the (unique) point $s_T\in\Bred(T)$ and a corresponding hyperspecial vertex $s\in\Bred(G)$ so that, $\ft_{s_T,0}=\ft(\cO_F)$, $\ft_{s_T,0+}=\ker(\ft(\cO_F)\to\ft(\F_q))$, $\fg_{s,0}=\fg(\cO_F)$ and $\fg_{s,0+}=\ker(\fg(\cO_F)\to\fg(\F_q))$. The result then follows from \autoref{lem:weil-G-T} and the fact that both sets are $\cO_F^{\times}$-torsors. In general, the statement can be reduced to the split case by~\autoref{lem:descent}. 
\end{proof}

Let $\bg$ be equipped with a $(\bG,\Gamma_F)$-invariant symmetric bilinear form $\langle\relbar,\relbar\rangle_{\bg}$ satisfying~\eqref{eq:normalised}. Taking the $\Gamma_F$-invariants, we obtain a $G$-invariant symmetric bilinear form $\langle\relbar,\relbar\rangle_{\fg}$ on $\fg$ and define the Weil index $\gamma_{\psi}(\fg)$ with respect to $\langle\relbar,\relbar\rangle_{\fg}$. 

\begin{lemma}\label{lem:reduction-Weil-p}\leavevmode
Let $\cF\subset \Bred(G)$ be a facet and let $\fg_{\cF}$ be equipped with the invariant symmetric bilinear form induced from $\langle\relbar,\relbar\rangle_{\bg}$ by reduction modulo $\fm_F$. Then we have 
\[
    \gamma_{\psi}(\fg) = \gamma_{\psi_0}(\fg_{\cF}).
\]
\end{lemma}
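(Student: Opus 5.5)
The idea is to compute the $p$-adic Gauss integral $G_{\psi}(V_{\cO},Q)$ for $V=\fg$ using the co-isotropic lattice $V_{\cO} = \fg_{\cF,0}$. Property~\eqref{eq:normalised} gives $(\fg_{\cF,0})^{\perp} = \fg_{\cF,0+}\subseteq \fg_{\cF,0}$, so this lattice is indeed co-isotropic and the Weil index may be computed with it. We also normalise the Haar measure using the pair of lattices $V_{\cO}^+ = \fg_{\cF,0+}\subseteq \fm_F\fg_{\cF,0}$. The independence of $\gamma_\psi(V,Q)$ from the choice of co-isotropic lattice is exactly what lets us pick the lattice attached to an arbitrary facet.

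\textbf{The integral.} For $X\in \fg_{\cF,0}$ we have $\langle X,X\rangle\in\cO_F$. If $Y\in \fg_{\cF,0+}$, then $\langle X,Y\rangle\in\fm_F$ and $\langle Y,Y\rangle\in\fm_F$, both because $\fg_{\cF,0+}=(\fg_{\cF,0})^\perp$. Since $p$ is odd, $2$ is a unit, so
\[
\psi\left(\tfrac{\langle X+Y,X+Y\rangle}{2}\right)=\psi\left(\tfrac{\langle X,X\rangle}{2}\right),
\]
using that $\psi$ is trivial on $\fm_F$. Hence the integrand is constant on cosets of $\fg_{\cF,0+}$, and it descends to $\psi_0\left(\tfrac{\langle \bar X,\bar X\rangle}{2}\right)$ on $\fg_{\cF,0}/\fg_{\cF,0+}\cong\fg_{\cF}$, with the reduced form. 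Therefore
\[
G_\psi(\fg_{\cF,0},Q)= d_V(\fg_{\cF,0+})\sum_{\bar X\in\fg_{\cF}}\psi_0\left(\tfrac{\langle \bar X,\bar X\rangle_{\fg_\cF}}{2}\right).
\]

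\textbf{Normalising.} The reduced form on $\fg_\cF$ is non-degenerate, since $(\fg_{\cF,0})^\perp=\fg_{\cF,0+}$ exactly; this was already used in the text to define the form on $\bg_\cF$. So the finite Gauss sum has absolute value $q^{\dim\fg_\cF/2}$. As both $G_\psi$ and the finite sum are complex numbers, dividing by absolute values gives
\[
\gamma_\psi(\fg)=q^{-\dim\fg_\cF/2}\sum_{\bar X\in\fg_\cF}\psi_0\left(\tfrac{\langle\bar X,\bar X\rangle}{2}\right)=\gamma_{\psi_0}(\fg_\cF).
\]
As a check on the measure constants: $d_V(\fg_{\cF,0})d_V(\fg_{\cF,0+})$ equals $d_V(\fg_{\cF,0+})^2 q^{\dim\fg_\cF}$, which matches the stated formula for $|G_\psi|$. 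In any case the constant plays no role, because we only take the phase.

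\textbf{Main care point.} The delicate step is verifying that the reduction of the form on $\fg_{\cF,0}/\fg_{\cF,0+}$ agrees, on $\F_q$-points, with the form on $\bg_\cF$ described before \autoref{lem:normalised-torus}. That form was defined via unramified extensions $E/F$ and $\Fr$-invariants. Taking $r=1$ in that construction recovers exactly our quotient $\fg_{\cF,0}/\fg_{\cF,0+}\cong\bg_\cF(\F_q)$, so the identification is tautological. One also needs $\bg_\cF(\F_q)=\fg_\cF$, and this follows from the tame/parahoric setup.
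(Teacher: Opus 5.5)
Your proof is correct and is essentially the argument behind the paper's one-line citation of \cite[Satz 3]{Car64}: compute the Weil index with the co-isotropic lattice $\fg_{\cF,0}$, whose dual is $\fg_{\cF,0+}$ by~\eqref{eq:normalised}, so that the Gauss integral becomes a positive multiple of the finite Gauss sum on $\fg_{\cF}$. One small slip, which you copied from a typo in the paper's measure normalisation: the correct inclusion is $\fm_F\fg_{\cF,0}\subseteq\fg_{\cF,0+}$, not the reverse (the reverse fails, e.g.\ for an Iwahori facet). This correct inclusion is what justifies $\langle\fg_{\cF,0},\fg_{\cF,0}\rangle\subseteq\cO_F$ and makes $\fg_{\cF,0}/\fg_{\cF,0+}$ an $\F_q$-vector space.
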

\begin{proof}
This follows from a known property of Weil indices, see~\cite[Satz 3]{Car64}.
\end{proof}
Let $d_{\fg}$ be the self-dual Haar measure on $\fg$, so that $d_{\fg}(\fg_{\cF, 0}) = q^{\dim \bg_s/2}$. The Fourier transform on $\fg$ is defined by
\[
(f\mapsto \hat f): \cci(\fg)\to \cci(\fg),\quad \hat f(X) = \int_{\fg} \psi(\langle X, Y\rangle) f(Y)d_{\fg} Y.
\]
It satisfies $\hat{\hat f}(X) = f(-X)$ and descends to a map $I(\fg)\to I(\fg)$. Given any facet $\cF\subset \Bred(G)$, we have a reductive group $\bG_{\cF}$ defined over $\F_q$. Given a function $f\in C(\fg_{\cF})$, we denote by $f_{\cF}\in \cci(\fg)$ the function on $\fg$ obtained by pulling back $f$ along the quotient map $\fg_{\cF, 0}\to \fg_{\cF,0}/\fg_{\cF,0+} = \fg_{\cF}$ and extending by $0$ outside $\fg_{\cF,0}$. Then, we have 
\begin{equation}\label{eq:fourier-induction}
    \widehat{f_{\cF}} = (\hat f)_{\cF}
\end{equation} 
for every $f\in C(\fg_{\cF})$.
\subsection{The space \texorpdfstring{$\FC(\fg)$}{FC(g)}}\label{ssec:FC-p}
Let $\cci(\fg)$ denote the space of $\C$-valued compactly supported locally constant functions on $\fg$ and $I(\fg):=\cci(\fg)_G$ the $G$-coinvariant quotient space. Let $\fg^{\tn}\subseteq \fg$ denote the subset of topologically nilpotent elements. Define $I(\fg^{\tn})$ to be the image of $\cci(\fg^{\tn})$ in $I(\fg)$.  Choose a $(\bG,\Gamma_F)$-invariant symmetric bilinear form $\langle\relbar,\relbar\rangle_{\bg}$ satisfying~\eqref{eq:normalised}. The Fourier transform on $\fg$ descends to the $G$-coinvariant $I(\fg)$ and yields a map $(f\mapsto \hat f):I(\fg)\to I(\fg)$. We set 
\[
    \FC(\fg) = I(\fg^{\tn})\cap \widehat{I(\fg^{\tn})}. 
\]
If the torus $Z(\bG)^{\circ}$ is not totally ramified over $F$, then $\FC(\fg) = 0$. We assume henceforth that $Z(\bG)^{\circ}$ is totally ramified over $F$. In particular the reduced building is the same as the extended building. \par
Let $V(\Bred(G))\subseteq \Bred(G)$ be the set of vertices of $\Bred(G)$. Given any $s\in V(\Bred(G))$, the space $\FC(\fg_s)$ defined in~\autoref{ssec:FC} associated with $\bG_s$ can be regarded as a subspace of $\cci(\fg)$ via the inclusion $f\mapsto f_s$. Let $\chi_{s,\scC}$ denote the image in $I(\fg)$ of the function $(\chi_{\scC})_s\in \cci(\fg)$ obtained from $\chi_{\scC}\in \FC(\fg_s)$ in this way. By~\eqref{eq:fourier-induction}, we have 
\begin{equation}\label{eq:reduction-lusztig}
    \widehat{\chi_{s, \scC}} = \zeta(s,\scC)\chi_{s, \scC}, 
\end{equation}
where $\zeta(s,\scC) = \zeta(\scC)$ is the Lusztig constant for the $\Fr$-stable cuspidal pair $(\cO, \scC)$ as defined in~\autoref{ssec:lusztig}. We let $\FC(\fg)_s$ denote the subspace of $\FC(\fg)$ spanned by $\{\chi_{s,\scC}\}_{(\cO, \scC)}$ as $(\cO,\scC)$ runs over the $\Fr$-stable cuspidal pairs on $\bg_s$. Then, it is shown in~\cite{Wal22} that there is a decomposition
\begin{equation}\label{eq:decomposition-s}
    \FC(\fg) = \bigoplus_{[s]\in V(\Bred(G))/G}\FC(\fg)_s,
\end{equation}
and $\{\chi_{s,\scC}\}_{s, (\cO, \scC)}$ forms a basis of $\FC(\fg)$ as $s$ runs over a set of representatives of the $G$-orbits in $V(\Bred(G))$ and $(\cO, \scC)$ runs over the $\Fr$-stable cuspidal pairs on $\fg_s$ up to isomorphism. \par
Set $\Omega = G_{\ad} / \pi(G)$, where $\pi:G\to G_{\ad}$ is induced from the canonical map $\bG\to \bG_{\ad}$, and set $\Xi = \Hom(\Omega, \Cc)$. The actions of $G_{\ad}$ on $\fg$ induces an action of $\Omega$ on $I(\fg)$ which preserves the subspace $\FC(\fg)$ and induces the isotypic decomposition
\begin{equation}\label{eq:decomposition-xi}
    \FC(\fg) = \bigoplus_{\xi\in \Xi}\FC(\fg)_{\xi}.
\end{equation}
Given a $G_{\ad}$-orbit $O\subseteq V(\Bred(G))$, the $G_{\ad}$-action on $\FC(\fg)$ preserves the sum $\FC(\fg)_O = \bigoplus_{[s]\in O/G}\FC(\fg)_s$. 
Given any $\xi\in \Xi$, we form the function 
\[
\chi_{O, \scC,\xi} = \sum_{g\in \Omega} \xi(g)\chi_{gs, g_*\scC}\in \FC(\fg)_{O, \xi}.
\]
If it is non-zero, then we have 
\begin{equation}\label{eq:lusztig-O-xi}
    \widehat{\chi_{O, \scC, \xi}} = \zeta(O,\scC, \xi)\chi_{O, \scC,\xi},\quad \text{where $\zeta(O,\scC, \xi) = \zeta(s, \scC)$}.
\end{equation}
Indeed, this is because $\zeta(s,\scC) = \zeta(gs, g_*\scC)$ for every $s\in G_{\ad}$.
\subsection{Endoscopic transfer for \texorpdfstring{$\FC(\fg)$}{FC(g)}}\label{ssec:endoscopy}
Until the end of this section, we assume that $\bG$ is simply connected, simple and quasi-split, and fix a $\Gamma_F$-stable Borel pair $(\bB, \bT)$ for $\bG$. The complex dual group $\hat G$ of $\bG$ is equipped with an action of the Galois group $\Gamma_F$ and a $\Gamma_F$-stable Borel pair $(\hat B, \hat T)$ induced from $(\bB, \bT)$. Let $\hat\Delta\subset X^*(\hat T)$ denote the set of simple roots of $\hat G$ and $\hat\Pi_a = \hat\Pi\cup \{-\hat\theta\}$, where $\hat\theta$ is the largest root of $\hat G$ with respect to $(\hat B, \hat T)$. Let $\sigma \mapsto \sigma_{\bG}\in \Aut(\hat T)$ denote the $\Gamma_F$-action on $\hat T$. We let $\hat\Omega$ denote the subgroup of the Weyl group $\hat W = \hat W(\hat G, \hat T)$ formed by the elements whose action on $X^*(\hat T)$ conserves the set $\hat\Pi_a$. \par
Let $\kappa$ be an elliptic endoscopic datum for $\bG$ with endoscopic group $\bG'$. The complex dual group $\hat G'$ of $\bG'$ comes with a $\Gamma_F$-stable Borel pair $(\hat B', \hat T)$ induced from $(\hat B, \hat T)$ and satisfying $\hat\Delta(\hat B',\hat T)\subset \hat\Delta_a$, and there exists a maps $\omega_{\bG'}:\Gamma_F\to \hat\Omega$ such that $\sigma_{\bG'} = \omega_{\bG'}(\sigma)\sigma_{\bG}$ for every $\sigma\in \Gamma_F$. Thus, we have a natural isomorphism $\bT\cong \bT'$, which transports the $(N_{\bG}(\bT), \Gamma_F)$-invariant bilinear form $\langle\relbar,\relbar\rangle_{\bg}|_{\bt\times \bt}$ on $\bt$ to a $(N_{\bG'}(\bT'), \Gamma_F)$-invariant bilinear form on $\bt'$, which extends to a $(\bG',\Gamma_F)$-invariant bilinear form $\langle\relbar,\relbar\rangle_{\bg'}$ on $\bg'$.
\begin{lemma}\label{lem:normalised-endoscopy}
If $\langle\relbar,\relbar\rangle_{\bg}$ satisfies~\eqref{eq:normalised}, then $\langle\relbar,\relbar\rangle_{\bg'}$ satisfies~\eqref{eq:normalised} as well.
\end{lemma}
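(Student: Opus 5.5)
The plan is to reduce the statement to the torus via \autoref{lem:normalised-torus}, and then to use \autoref{lem:descent} to pass to a splitting field, where the two tori can be compared directly.

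\textbf{Step 1: reduction to the torus.} Since $\langle\relbar,\relbar\rangle_{\bg'}$ is by construction the unique $(\bG',\Gamma_F)$-invariant extension of its restriction to $\bt'$, \autoref{lem:normalised-torus} (applied to $\bG'$ and the maximal torus $\bT'$) reduces the claim to the following: the transported form on $\bt'$ satisfies~\eqref{eq:normalised} for $T'$. By \autoref{lem:normalised-torus} applied to $\bG$ and $\bT$, the form $\langle\relbar,\relbar\rangle_{\bg}|_{\bt\times\bt}$ satisfies~\eqref{eq:normalised} for $T$. So it suffices to show that the isomorphism $\bt\cong\bt'$ carries the torus condition for $T$ to the torus condition for $T'$.

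\textbf{Step 2: passage to a splitting field.} The tori $\bT$ and $\bT'$ are not isomorphic over $F$, because their Galois actions differ by the cocycle $\omega_{\bG'}$. We therefore pass to a finite Galois extension $E/F$, tame by our assumption on $p$, which splits $\bG$, $\bG'$ and trivialises $\omega_{\bG'}$ on $\Gamma_E$. Over $E$ the identification $\bT_E\cong\bT'_E$ is an isomorphism of split tori over $E$. It also preserves the integral structures, since both are the canonical lattices $X_*\otimes\cO_E$ coming from the common cocharacter lattice. Hence for split tori the building is a point, the filtration is $\ft_E(\cO_E)\supset\ft_E(\fm_E)$, and condition~\eqref{eq:normalised} says exactly that the form is $\cO_E$-valued and nondegenerate modulo $\fm_E$ on $X_*\otimes\cO_E$. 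This condition is visibly transported by the isomorphism.

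\textbf{Step 3: descent.} Apply \autoref{lem:descent} twice. The form on $\bg_E$ satisfies~\eqref{eq:normalised} because the form on $\bg$ does. Through Steps 1 and 2, the form on $\bg'_E$ then satisfies it. Descending again along $E/F$, the form on $\bg'$ satisfies~\eqref{eq:normalised}. Equivalently, we can run the torus-level version of \autoref{lem:descent} on $\bT'$ and then apply \autoref{lem:normalised-torus} over $F$.

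\textbf{Expected obstacle.} The main subtlety is whether \autoref{lem:descent} and \autoref{lem:normalised-torus} hold for the endoscopic group $\bG'$, which need not be simple or simply connected. We handle this by decomposing $\bg'$ into its centre and its simple $F$-factors. The hypothesis $p\ge 2h(\bG)+1$ controls the Coxeter numbers of $\bG'$, whose roots form a subsystem of those of $\bG$. The second point to check is that the isomorphism $\bT_E\cong\bT'_E$ really matches the $\cO_E$-lattices $X_*\otimes\cO_E$. We verify this from the fact that the dual groups share $\hat T$, so $X_*(\bT)=X^*(\hat T)=X_*(\bT')$.
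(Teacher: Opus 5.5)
Your proposal is correct and follows the paper's route. The paper's proof is the one-line remark that the lemma follows from \autoref{lem:descent} and \autoref{lem:normalised-torus}, which is exactly your passage to a common splitting field $E$ followed by a comparison on maximal tori; you additionally make explicit that over $E$ the identification $\bT_E\cong\bT'_E$ is $E$-rational and matches the lattices $X_*\otimes\cO_E$ via $X_*(\bT)=X^*(\hat T)=X_*(\bT')$, and that the two lemmas must be applied to the possibly non-simple group $\bG'$, both of which the paper leaves implicit.
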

\begin{proof}
    This follows immediately from~\autoref{lem:descent} and~\autoref{lem:normalised-torus}.
\end{proof}

The main results of~\cite{Wal22,Wal25} show that, given a $G_{\ad}$-orbit $O\subseteq V(\Bred(G))$ a cuspidal pair $(\cO, \scC)$ on $\fg_s$ for $s\in O$ and a $\xi\in \Xi$ such that $\chi_{O, \scC,\xi}\neq 0$, there is an essentially unique elliptic endoscopic datum $\kappa$ with endoscopic group denoted by $\bG'$, such that $\chi_{O, \scC, \xi}$ is $\kappa$-stable, and $\chi_{O, \scC,\xi}$ has a unique smooth $\kappa$-endoscopic transfer in $\FC(\fg')^{\st}$, which we denote by $\chi_{O,\scC,\xi}^{\kappa}$, and moreover, $\chi_{O,\scC,\xi}^{\kappa}$ lies in $\FC(\fg')_{O',\xi'}$ for some $G'_{\ad}$-orbit $O'\subseteq V(\Bred(G'))$ and some $\xi'\in \Xi(G')$. \par
\begin{lemma}\label{lem:transfer-fourier}
Let $O$, $\xi$, $\kappa$, $O'$ and $\xi'$ be as above. Then we have
\[
    \widehat{\chi_{O,\scC,\xi}^{\kappa}} =  \zeta^{\kappa}(O, \scC,\xi)\chi_{O,\scC,\xi}^{\kappa},
\]
where $\zeta^{\kappa}(O, \scC,\xi)$ the constant satisfying
\[
    \gamma(\fg)\zeta(O, \scC,\xi) = \gamma(\fg')\zeta^{\kappa}(O, \scC,\xi).
\]
\end{lemma}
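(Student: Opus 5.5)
The plan is to combine three facts: the eigenvalue relation \eqref{eq:lusztig-O-xi} on the $\bG$ side, Waldspurger's theorem that Fourier transform commutes with endoscopic transfer up to Weil indices \cite{Wal95,Wal97}, and the uniqueness of the smooth transfer recalled above. Write $\chi = \chi_{O,\scC,\xi}$ and $\chi^\kappa$ for its $\kappa$-transfer in $\FC(\fg')^{\st}$. Waldspurger's theorem is conditional on the fundamental lemma for Lie algebras, which is now known by Ng\^o~\cite{Ngo10}. In the normalisation of \cite[VIII]{Wal95} it states the following: if $f\in\cci(\fg)$ has transfer $f^\kappa\in\cci(\fg')$, then $\hat f$ has transfer
\[
\gamma_\psi(\fg)\,\gamma_\psi(\fg')^{-1}\,\widehat{f^\kappa}.
\]
Here both Fourier transforms are taken with respect to the forms $\langle\relbar,\relbar\rangle_{\bg}$ and $\langle\relbar,\relbar\rangle_{\bg'}$ fixed in \autoref{ssec:endoscopy}, together with their self-dual measures.

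The first step is to check that our normalisations match Waldspurger's. He requires the forms on $\bt$ and $\bt'$ to correspond under the isomorphism $\bT\cong\bT'$, which is exactly how $\langle\relbar,\relbar\rangle_{\bg'}$ was built. \autoref{lem:normalised-endoscopy} shows that this form again satisfies \eqref{eq:normalised}, so the self-dual measures and $\psi$ (of conductor $\fm_F$) are the same on both sides. There may be an additional constant in Waldspurger's formula depending on the transfer factor normalisation, or an inverse of the quotient of Weil indices. I would pin this down by comparing with the $\gamma$-factors in \cite[VIII.5--VIII.7]{Wal95}, and I expect this bookkeeping to be the main obstacle. In particular I must verify that any sign ambiguity is absorbed into the Weil indices as defined here. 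I must also verify that the transfer factors, being compatible with the Fourier transform, do not introduce an extra character of $\Omega$ acting on $\xi$.

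With the normalisation settled, apply the theorem to $f=\chi$. By \eqref{eq:lusztig-O-xi}, $\hat\chi=\zeta(O,\scC,\xi)\chi$. By linearity of transfer, its transfer is $\zeta(O,\scC,\xi)\chi^\kappa$. On the other hand, the theorem says this transfer is $\gamma(\fg)\gamma(\fg')^{-1}\widehat{\chi^\kappa}$. The function $\widehat{\chi^\kappa}$ is again stable, since Fourier transform preserves stable distributions on $\fg'$. Because $\chi^\kappa\in\FC(\fg')$, it also lies in $\FC(\fg')$, indeed in $\FC(\fg')_{O',\xi'}$. Smooth transfer into $\FC(\fg')^{\st}$ is unique, so equating the two expressions gives
\[
\widehat{\chi^\kappa}=\gamma(\fg)\gamma(\fg')^{-1}\zeta(O,\scC,\xi)\,\chi^\kappa .
\]
This is the claim with $\gamma(\fg)\zeta(O,\scC,\xi)=\gamma(\fg')\zeta^\kappa(O,\scC,\xi)$.

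A subtle point is that the theorem gives an equality only in the stable quotient of $I(\fg')$. Uniqueness is what turns it into an equality in $\FC(\fg')^{\st}$, and hence an honest eigenvalue equation. Finally, since $\chi^\kappa\neq0$ (because $\chi\neq0$), the constant $\zeta^\kappa$ is well defined.
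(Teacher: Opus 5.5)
Your route is the paper's route. The paper's proof simply cites Waldspurger's commutation of Fourier transform with endoscopic transfer up to Weil indices (\cite{Wal95,Wal97}, with the fundamental lemma from \cite{Ngo10}). Your unpacking via linearity, Fourier-stability of $\FC(\fg')^{\st}$ and uniqueness of the transfer is the right way to turn that into an eigenvalue identity. However, the one step that carries the content of the lemma does not follow. Your recalled identity gives $\zeta(O,\scC,\xi)\,\chi^\kappa=\gamma(\fg)\gamma(\fg')^{-1}\widehat{\chi^\kappa}$. Hence $\widehat{\chi^\kappa}=\gamma(\fg')\gamma(\fg)^{-1}\zeta(O,\scC,\xi)\,\chi^\kappa$, i.e.\ $\gamma(\fg)\zeta^\kappa=\gamma(\fg')\zeta$, which is the reverse of the claim. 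Your final display moves the factor across without inverting it. The two relations differ by $(\gamma(\fg)/\gamma(\fg'))^2\in\{\pm1\}$, and this sign can be $-1$, so the direction is not cosmetic.

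For instance, take the following data:
\begin{enumerate}
\item $\bG=\SL_2$ with the trace form, which satisfies \eqref{eq:normalised};
\item $q\equiv 3 \pmod{4}$;
\item $\bG'$ the elliptic torus attached to a ramified quadratic extension $F(\sqrt{\varpi})$, with $\varpi$ a uniformiser. This is the endoscopic group arising for the $\xi$ with $\chi_{O,\scC,\xi}\neq 0$.
\end{enumerate}
A Gauss-sum computation on $\fsl_2(\F_q)$ gives $\gamma(\fg)=\varepsilon(\psi_0)$ and $\zeta(\scC)=\left(\frac{-1}{q}\right)\varepsilon(\psi_0)=\varepsilon(\psi_0)^{-1}$. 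On $\fg'=F\sqrt{\varpi}$ the transported form is $(a\sqrt{\varpi},b\sqrt{\varpi})\mapsto 2\varpi ab$. So $\cO_F\sqrt{\varpi}$ is a self-dual lattice on which $\psi(Q(X,X)/2)\equiv 1$. Thus $\gamma(\fg')=1$, $\FC(\fg')$ is spanned by the characteristic function of that lattice, and $\zeta^\kappa=1$. The lemma holds, since $\varepsilon(\psi_0)\varepsilon(\psi_0)^{-1}=1\cdot 1$. The relation your input actually yields would instead force $\varepsilon(\psi_0)^2=\left(\frac{-1}{q}\right)=1$, which is false here.

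So, read with this paper's normalisation of $\gamma_\psi$, the identity you recalled is itself the wrong way round. Your two slips happen to cancel, but the argument does not justify the final display. What you need is $(\hat f)^\kappa=\gamma(\fg')\gamma(\fg)^{-1}\widehat{f^\kappa}$, equivalently $\widehat{f^\kappa}=\gamma(\fg)\gamma(\fg')^{-1}(\hat f)^\kappa$, in the normalisations fixed here. Matching Waldspurger's conventions to these is exactly the bookkeeping you deferred as the main obstacle. Since that constant is the whole content of the lemma, the proposal as written does not prove it. With that input corrected, the rest of your argument goes through unchanged.
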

\begin{proof}
This follows from~\cite[Conjecture 1, VIII.7]{Wal95}, which is proven in~\cite[Th\'{e}or\`{e}me 1.4]{Wal97} based on the fundamental lemma for Lie algebras, available by~\cite{Ngo10}.
\end{proof}

\section{Proof of~\autoref{thm:main}}\label{sec:proof}
Let $\bG_0$ be a connected reductive group over $\F_q$, $\psi_0:\F_q\to \Cc$ a non-zero character and $\langle\relbar,\relbar\rangle_{\fg_0}$ a non-degenerate $\bG_0$-invariant symmetric bilinear form as in the statement. We prove the statement by induction on $\dim\bG_0$. When $\dim \bG_0 = 0$, we have $\FC(\fg_0) = 0$, so the statement holds trivially. Let us assume that $n > 0$ and that the statement has been proven  for all $r\ge 1$ and for all connected reductive groups $\bH_0$ defined over $\F_{q^r}$ such that $\dim \bH_0 < n$. \par
When $Z(\fg_0) \neq 0$, we have $\FC(\fg_0) = 0$, so there is nothing to prove. We assume hence $\bG_0$ semisimple. By~\autoref{lem:reduction-weil} and~\autoref{lem:reduction-lusztig}, we are reduced to considering $\bG_0$ simple and simply connected. We have seen in~\autoref{rem:E8F4G2} that the statement is reduced to~\cite[3.3.10]{Kaw86} if $\bG_0$ is of type $G_2$, $F_4$ or $E_8$. If $\bG_0$ is of type $\li^3D_4$, there is no cuspidal pair on $\fspin(8)$, so $\FC(\fspin(8)) = 0$ and there is nothing to prove. Therefore, we assume that $\bG_0$ is simple and simply connected of one of the types $A_n$, $B_n$, $C_n$, $\li^{1,2}D_n$, $\li^{1,2} E_6$ and $E_7$,. \par
Let $F$ be any non-archimedean local field of characteristic $0$ with residue field $\F_q$ and let $\bG$ be an unramified quasi-split simple group over $F$ such that there exists an isomorphism $\bG_{s} \cong \bG_0$ for some hyperspecial vertex $s\in V(\Bred(G))$. Pick a character $\psi:F\to \Cc$ of conductor $\fm_F$ such that $\psi$ induces $\psi_0$ by reduction modulo $\fm_F$. Pick a $\bG$-invariant symmetric bilinear form $\langle\relbar,\relbar\rangle_{\bg}$ on $\bg$ satisfying~\eqref{eq:normalised}. Moreover, we may rescale $\langle\relbar,\relbar\rangle_{\bg}$ by some scalar $a\in \cO^{\times}_F$ such that the bilinear form on $\bg_0$ induced from $\langle\relbar,\relbar\rangle_{\bg}$ coincides with the bilinear form $\langle\relbar,\relbar\rangle_{\bg_0}$ chosen from the start. Let $O\subset V(\Bred(G))$ denote the $G_{\ad}$-orbit of $s$. \par

Let $(\cO, \scC)$ be a $\Fr$-stable cuspidal pair on $\bg_0$. Choose $\xi\in \Xi(G)$ such that the element $\chi_{s,\scC}\in \FC(\fg)$ has a non-zero $\FC(\fg)_{O, \xi}$-component under the decomposition~\eqref{eq:decomposition-xi}. Then, the element $\chi_{O, \xi}$ is $\kappa$-stable for some elliptic endoscopic datum $\kappa$ with endoscopic group $\bG'$. Its Lie algebra $\bg'$ is endowed with a $\bG'$-invariant symmetric bilinear form $\langle\relbar,\relbar\rangle_{\bg'}$ as in~\autoref{ssec:endoscopy}, which satisfies~\eqref{eq:normalised} by~\autoref{lem:normalised-endoscopy}. By a careful examination of~\cite[\S 4, \S 6]{Wal25}, we see that $\kappa$ is never the principal endoscopic datum, namely $\bG'\neq \bG$, under our assumption on the type of $\bG_0$, and we have thus $\dim \bG' < \dim \bG$. Then, we have for every $s'\in V(\Bred(G'))$
\[
    \dim \bG'_{s'} \le \dim \bG' < \dim \bG = \dim \bG_{s},
\]
so the induction hypothesis applies to the group $\bG'_{s'}$, which implies 
\[
    \hat f = \gamma(\fg'_{s'})^{-1} f\quad \text{for $f\in \FC(\fg'_{s'})$.} 
\]
From~\eqref{eq:decomposition-s},~\autoref{lem:reduction-Weil-p},~\eqref{eq:reduction-lusztig} and the last equation, we deduce
\begin{equation}\label{eq:induction}
    \hat f = \gamma(\fg')^{-1} f\quad \text{for $f\in \FC(\fg')$.}
\end{equation}
Consider the smooth $\kappa$-endoscopic transfer $\chi^{\kappa}_{O, \xi}\in \FC(\fg')$. 
We obtain
\[
    \gamma(\fg_{0})\zeta(\scC) \stackrel{\text{\autoref{lem:reduction-Weil-p}+\eqref{eq:reduction-lusztig}}}{=}  \gamma(\fg)\zeta(s,\scC) \stackrel{\text{\eqref{eq:lusztig-O-xi}}}{=} \gamma(\fg)\zeta(O, \scC,\xi)
    \stackrel{\text{\autoref{lem:transfer-fourier}}}{=} \gamma(\fg')\zeta^{\kappa}(O, \scC,\xi)   \stackrel{\text{\eqref{eq:induction}}}{=}  1.
\]
Namely, we have $\widehat{\chi_{\scC}} = \gamma(\fg_{0})^{-1}\chi_{\scC}$. Since $\{\chi_{\scC}\}_{(\cO, \scC)}$ spans $\FC(\fg_0)$ as $(\cO, \scC)$ runs over the $\Fr$-stable cuspidal pairs on $\bg_0$, this completes the inductive step and therefore the proof of the statement. \qed

\section{On two conjectures of Letellier}\label{sec:let}

Resume to the setup of~\autoref{sec:intro}.
In \cite{Let05}, Letellier studies the relation between the Fourier transform and Deligne--Lusztig induction $\mathcal R_L^G:C(\mathfrak l)^L \to C(\mathfrak g)^G$ on the space of invariant functions on Lie algebras. \cite[p.2, Conjecture (*)]{Let05} asserts that
$$ \widehat{(\mathcal R_L^G f)} = \varepsilon_{\bG}\varepsilon_{\bL} \mathcal R_L^G \hat f \qquad \forall\ f \in C(\mathfrak l)^L, $$
where $\varepsilon_{\bG}=(-1)^{\rank_{\F_q}(\bG)}$ and $\varepsilon_{\bL}=(-1)^{\rank_{\F_q}(\bL)}$. Letellier reduces \cite[Theorem 6.2.16]{Let05} this conjecture to some cases of another conjecture: Suppose $(\cO,\scC)$ is a $\Fr$-stable cuspidal pair on $\bg$. We have $\zeta(\scC)$ the Lusztig constant in \autoref{ssec:lusztig}. Let $\theta$ be a pinned automorphism of $\bG$, and $^{\theta}\bG$ the group over $\F_q$ with the twisted Frobenius. Suppose $(\cO,\scC)$ is also $\li^\theta\Fr$-stable. Then we can consider the Lusztig constant $\li^{\theta}\zeta(\scC)$ associated to $\scC$ and $\li^\theta\Fr$.

\begin{corollary}\label{cor:Letellier} Assume that $p > 3(h(\bG) - 1)$. \cite[p.3, Conjecture (**)]{Let05} is true, namely $\varepsilon_{\bG}\cdot\zeta(\scC)=\varepsilon_{^{\theta}\bG}\cdot\li^{\theta}\zeta(\scC)$. Therefore \cite[p.2, Conjecture (*)]{Let05} is also true.
\end{corollary}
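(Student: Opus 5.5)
By \autoref{thm:main}, applied once to $\bG$ with Frobenius $\Fr$ and once to ${}^\theta\bG$ with Frobenius ${}^\theta\Fr$, we have $\zeta(\scC)=\gamma(\fg)^{-1}$ and ${}^\theta\zeta(\scC)=\gamma({}^\theta\fg)^{-1}$. The form $\langle\relbar,\relbar\rangle$ may be chosen $\theta$-invariant: $\theta$ is a pinned automorphism, and on each simple factor the invariant form is unique up to scalar, which we normalise compatibly, for instance via \autoref{lem:weil-G-T}(1) on $\bt$. The hypothesis $p>3(h(\bG)-1)$ is unchanged because ${}^\theta\bG$ has the same Coxeter numbers. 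Conjecture (**) then becomes the identity
\[
\varepsilon_{\bG}\,\gamma(\fg)^{-1}=\varepsilon_{{}^\theta\bG}\,\gamma({}^\theta\fg)^{-1}.
\]

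To prove this, use \autoref{lem:galois-twist} applied to $\sigma=\theta|_{\bt}$ on the $\Fr$-stable torus, together with \autoref{lem:weil-G-T}(2) for both groups. The point is that the $\F_q$-structure on $\bt$ twisted by $\theta$ is the torus ${}^\theta\bt$ of ${}^\theta\bG$. This gives $\gamma({}^\theta\fg)=\det(\theta|_{\bt})\gamma(\fg)$, which extends \autoref{lem:quasi-split-twist} beyond the simply connected simple case. The relevant determinant is that of $\theta$ on $\bt$, or equivalently on $X_*(\bT)\otimes\Q$.

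It remains to show $\varepsilon_{\bG}\varepsilon_{{}^\theta\bG}=\det(\theta|_{\bt})$. The $\F_q$-rank of a quasi-split group is the dimension of the $\Fr$-invariants of its maximally split torus acting on $X_*(\bT)\otimes\Q$. Write $\Fr=q\tau$ on $X_*$, with $\tau$ of finite order. Then $\rank_{\F_q}\bG=\dim\ker(\tau-1)$ and $\rank_{\F_q}{}^\theta\bG=\dim\ker(\theta\tau-1)$, and $\theta$ commutes with $\tau$ since both preserve the pinning. For any finite-order $g$ one has $(-1)^{\dim\ker(g-1)}=(-1)^{\dim V}\det(g)$. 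This holds because complex conjugate eigenvalue pairs contribute $1$ to the determinant and even dimension, while eigenvalue $-1$ contributes $-1$ per dimension. Applying this to $\tau$ and to $\theta\tau$ gives $\varepsilon_{\bG}\varepsilon_{{}^\theta\bG}=\det(\tau)\det(\theta\tau)=\det(\theta)$, as $\det(\tau)^2=1$. This proves (**).

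Finally, (*) follows from (**) by \cite[Theorem 6.2.16]{Let05}. The step needing most care is the reduction in the first paragraph. One must check that a $\theta$-invariant form restricts compatibly, so that the Weil index of ${}^\theta\fg$ is computed with the same form Letellier uses; the Lusztig constant itself is independent of the form only up to the dependence of $\gamma$ on it. One must also check that the Letellier reduction requires exactly the hypothesis on $p$ available here.
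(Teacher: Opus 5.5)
Your proposal is correct and is essentially the paper's argument: \autoref{thm:main} reduces (**) to $\varepsilon_{\bG}\gamma(\fg)^{-1}=\varepsilon_{{}^\theta\bG}\gamma({}^\theta\fg)^{-1}$; the Galois-twist lemma on $\bt$ (which is how \autoref{lem:quasi-split-twist} is proved) supplies the factor $\det\theta$; and your identity $(-1)^{\dim\ker(g-1)}=(-1)^{\dim V}\det g$ is a linear-algebra form of the paper's count of $\theta$-orbits on the Dynkin diagram, with the mild advantage of covering non-split $\bG$ directly. One small correction: pinned automorphisms need not commute with each other (the diagram automorphisms of $D_4$ form $S_3$), so ``$\theta$ commutes with $\tau$'' is not automatic, but nothing depends on it, since $\det(\theta\tau)=\det\theta\det\tau$ regardless and the Lang-map proof of \autoref{lem:galois-twist} works verbatim for any $\sigma$ preserving $\overline{Q}$, whether or not $\sigma$ is rational over $\F_q$.
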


\begin{proof} By \autoref{thm:main}, we are reduced to the observation $\varepsilon_{\bG}\cdot\gamma_{\psi}(\fg)^{-1}=\varepsilon_{^{\theta}\bG}\cdot\gamma_{\psi}(^{\theta}\fg)^{-1}$. This follows from \autoref{lem:quasi-split-twist} and that the determinant of $\theta$ on $X^*(\bT)$ is equal to $\varepsilon_{\bG}\cdot\varepsilon_{\li^\theta\bG}$ by counting the $\theta$-orbits in the Dynkin diagram of $\bG$. 
\end{proof}

However, we would like to point out that Letellier's original argument already suffices to prove \cite[p.2, Conjecture (*)]{Let05} when $p> 3(h(\bG) - 1)$. It was observed in \cite[\S6.2.18]{Let05} that a case-by-case analysis using results in \cite{Kaw86}, \cite{DLM97}, \cite{Wal01} and \cite{DLM03} resolves all but one case of \cite[p.3, Conjecture (**)]{Let05} necessary for proving \cite[p.2, Conjecture (*)]{Let05}. The last case is $\bG=\Spin(2n)$ (possibly non-split) and the cuspidal pair does not descend to $\SO(2n)$.
Such a situation does not in fact occur, as shown in the lemma below. 

\begin{lemma}
    Let $(\cO, \scC)$ be a cuspidal pair on $\Spin(2n)$ that does not arise from one on $\SO(2n)$. Then there is exactly one $\mathbb F_q$-structure on $\Spin(2n)$ for which $(\cO, \scC)$ is stabilized by the Frobenius endomorphism.
\end{lemma}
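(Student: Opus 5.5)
We will use Lusztig's classification of cuspidal pairs on $\Spin(2n)$ and then count the $\F_q$-structures directly. A cuspidal pair on $\Spin(2n)$ that does not come from $\SO(2n)$ has nontrivial central character on $Z(\Spin(2n))$. Its restriction to the kernel $\mu_2 = \ker(\Spin(2n)\to \SO(2n))$ is therefore nontrivial, so the character is one of the two \enquote{half-spin} type characters. By~\cite{Lus84}, such a pair exists only when $n$ is a certain triangular-type number, namely $2n = j^2$ with $j$ even, or the analogous condition from Lusztig's list. Moreover, for each central character $\chi$ of $Z(\Spin(2n))$ nontrivial on $\mu_2$ there is \emph{at most one} cuspidal pair. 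The unipotent orbit $\cO$ has Jordan type with distinct parts $1,5,9,\dots$ or $3,7,11,\dots$, so it is stable under all automorphisms. The pair is therefore determined by its central character.

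The $\F_q$-structures on $\Spin(2n)$ are the split form $\Fr$ and the quasi-split twisted form $\li^\theta\Fr$. Here $\theta$ is the graph automorphism swapping the two end nodes; we set aside triality, which is excluded when $n\neq 4$, and $n=4$ does not carry such a cuspidal pair. A Frobenius $F$ stabilises $(\cO,\scC)$ if and only if it preserves the central character $\chi$, that is, $\chi\circ F|_{Z} = \chi$. The key step is to compute the action of $\Fr$ and $\theta\Fr$ on $Z(\Spin(2n))$, which is $\mu_4$ for $n$ odd and $\mu_2\times\mu_2$ for $n$ even.
\begin{itemize}
\item $\theta$ swaps the two half-spin characters, i.e. the two characters of $Z$ nontrivial on $\mu_2$.
\item $\Fr$ acts on $Z$ by the $q$-th power map. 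On $\mu_2\times\mu_2$ this is trivial. On $\mu_4$ it is trivial when $q\equiv 1 \bmod 4$ and inversion when $q\equiv 3\bmod 4$; inversion swaps the two faithful characters of $\mu_4$.
\end{itemize}
Hence exactly one of $\Fr$, $\theta\Fr$ fixes $\chi$. For $n$ even this is $\Fr$. For $n$ odd it depends on whether $q \equiv 1$ or $3 \bmod 4$. This proves the uniqueness claim.

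The main obstacle is confirming from Lusztig's classification that the cuspidal pair with given central character is unique and that $\cO$ is $\theta$-stable, so that stability under Frobenius reduces to stability of $\chi$. I would cite~\cite[\S 14]{Lus84} for this. If needed, I would argue it directly from the generalised Springer correspondence, where the cuspidal local system is characterised by its central character.
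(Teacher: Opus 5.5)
Your proposal is correct and follows essentially the paper's route: you reduce Frobenius-stability of $(\cO,\scC)$ to stability of its central character $\chi$, justifying the converse direction by uniqueness of the cuspidal pair with given $\chi$ (which the paper only asserts). You then observe that exactly one of the two $\F_q$-forms swaps the two characters of $Z(\Spin(2n))$ that are nontrivial on $\ker(\Spin(2n)\to\SO(2n))$; your $q \bmod 4$ analysis for $n$ odd makes explicit which form this is, which the paper leaves implicit. One small correction: Lusztig's existence condition for such pairs is that $2n$ be a triangular number, consistent with the Jordan types $1,5,9,\dots$ or $3,7,11,\dots$ that you list. The condition $2n=j^2$ is the one for $\SO(2n)$. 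This is harmless, since you only use it to exclude $n=1,2,4$.
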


\begin{proof}
The statement follows immediately from the classification of Frobenius-stable cuspidal pairs, see~\cite[\S\S 2.6,\,2.7]{Wal25}. We provide a proof here.
    By the classification of cuspidal pairs in \cite{Lus84}, we know that $n \not=1,2,4$. 
    In particular, $\Spin(2n)$ admits exactly two $\mathbb F_q$-structures up to isomorphism, the split form and the non-split form. \par
    
    Let $\chi$ be the central character associated with $(\cO, \scC)$. Since $(\cO, \scC)$ does not arise from $\SO(2n)$, we have $\chi(\varepsilon)=-1$, 
    where $\varepsilon$ denotes the generator of 
    $$ \ker(\Spin(2n) \to \SO(2n))\cong\mu_2. $$
    Let $z_1, z_2 \in Z(\Spin(2n))$ be the two remaining elements distinct from $1$ and $\varepsilon$. Then $z_1 = \varepsilon z_2$, and hence $\chi(z_1) \neq \chi(z_2)$. On the other hand, among the split and non-split $\mathbb F_q$-structures on $\Spin_{2n}$, exactly one induces a non-trivial Frobenius action on the center $Z(\Spin(2n))$, swapping $z_1$ and $z_2$. Since $(\cO, \scC)$ is stabilized by the Frobenius endomorphism if and only if its central character $\chi$ is, it follows that there is a unique $\mathbb F_q$-structure for which $(\cO, \scC)$ is stabilized by the Frobenius endomorphism.
\end{proof}

\begin{remark} The similar proof works with a simple group of $E_6$ and a simple group of type $A$. 
\end{remark}

\section*{Acknowledgements}
We thank Emmanuel Letellier for communicating with us the status of his conjectures. 
\bibliographystyle{amsalpha}
\bibliography{biblio.bib}
\end{document}